\documentclass[12pt]{amsart}
\usepackage{latexsym}
\usepackage{amssymb, amsmath, mathrsfs,amsfonts}

\newcommand{\A}{\mathcal A}
\newcommand{\B}{\mathcal B}
\newcommand{\M}{\mathcal M}
\newcommand{\C}{\mathcal C}
\newcommand{\T}{\mathcal T}
\newcommand{\N}{\mathbb N}
\newcommand{\Q}{\mathbb Q}
\newcommand{\Z}{\mathbb{Z}}
\newcommand{\SR}{\mathcal{SR}}
\newcommand{\dom}{\text{dom}}

\newcommand{\lqt}{\textquotedblleft}
\newcommand{\rqt}{\textquotedblright~}
\newcommand{\la}{\langle}
\newcommand{\ra}{\rangle}

\newtheorem{theorem}{Theorem}[section]      
\newtheorem{lemma}[theorem]{Lemma}       
\newtheorem{proposition}[theorem]{Proposition}
\newtheorem{corollary}[theorem]{Corollary}

\theoremstyle{definition}
\newtheorem{definition}[theorem]{Definition} 
\newtheorem{example}[theorem]{Example}

\begin{document}

\pagestyle{headings}

\title{Model Theoretic Complexity of Automatic Structures}
\author{Bakhadyr Khoussainov
\and 
Mia Minnes 
}

\maketitle

\begin{abstract}
\noindent We study the complexity of automatic structures via  well-established concepts from both logic and model theory, including ordinal heights (of well-founded relations), Scott ranks  of 
structures, and Cantor-Bendixson ranks (of trees). We prove the following results: 1)~The ordinal height of any automatic well-founded partial order is bounded by $\omega^\omega$; 2)~The ordinal heights of automatic well-founded relations are unbounded below $\omega_{1}^{CK}$, the first non-computable ordinal; 3)~For any computable ordinal $\alpha$, there is an automatic structure of Scott rank at least $\alpha$.  Moreover, there are automatic structures of Scott rank $\omega_1^{CK}, \omega_1^{CK}+1$; 4)~For any computable ordinal $\alpha$, there is an automatic successor tree of Cantor-Bendixson rank $\alpha$.  
\end{abstract}

\section{Introduction}\label{s:Intro}

In recent years, there has been increasing interest in the study of structures 
that can be presented by automata. The underlying idea 
is to apply techniques of automata theory to decision 
problems that arise in logic and applications such as databases and verification. A typical decision problem 
is the model checking problem: for a 
structure $\A$ (e.g.\ a graph), design an algorithm that, given a formula $\phi(\bar{x})$ in a formal system and a tuple $\bar{a}$ from the structure, decides if 
$\phi(\bar{a})$ is true in $\A$. In particular, when the formal system is the first order predicate logic or the monadic second order logic, we would like to know if the 
theory of the structure  is decidable.  Fundamental early results in this direction by B\"uchi (\cite{Buc60}, \cite{Buc62}) and Rabin (\cite{Rab69}) proved the decidability of the monadic second order theories of the successor on the natural numbers and of the binary tree.
There have  been numerous applications and extensions of these results in logic, algebra \cite{EPCHLT92}, verification and model checking \cite{VW84} \cite{Var96}, and databases \cite{Var05}.    Moreover, automatic structures provide a theoretical framework for constraint databases over discrete  domains such as strings and trees \cite{BL02}.  Using simple closure properties and the decidability of the emptiness problem for automata, one can prove that  the first order (and monadic second order) theories of some well-known structures are decidable. Examples of such structures are Presburger arithmetic and some of its extensions, the term algebra, the
real numbers under addition, finitely generated abelian groups, and the atomless Boolean algebra. Direct proofs of  these results, without the use of automata, require non-trivial  technical work.

A structure $\A=(A; R_0, \ldots, R_m)$ is {\bf automatic} if the domain $A$ and all the relations $R_0, \ldots, R_m$ of the structure are recognised by finite automata (precise definitions are in the next section). 
Independently, Hodgson \cite{H82} and later Khoussainov and Nerode \cite{KhN95} proved that for any given automatic structure there is an algorithm that solves the model checking problem for the first order logic. In particular, the first order theory of the structure is decidable.  Blumensath and Gr\"adel proved a logical characterization theorem stating that automatic structures are exactly those definable in the fragment of arithmetic $(\omega; +, |_2, \leq, 0)$, where $+$ and $\leq$ have their usual meanings and $|_2$ is a weak divisibility predicate for which $x|_2 y$ if and only if $x$ is a power of $2$ and divides $y$ (see \cite{BG00}). In addition, for some classes of automatic structures there are characterization theorems that have direct algorithmic implications. For example, in \cite{Del04}, Delhomm\'e proved that automatic well-ordered sets are all strictly less than $\omega^\omega$. Using this characterization, \cite{KhRS03} gives an algorithm which decides the isomorphism problem for automatic well-ordered sets. The algorithm is based on extracting the Cantor normal form for the ordinal isomorphic to the given automatic well-ordered set.  Another characterization theorem of this ilk gives that automatic Boolean algebras are exactly those that are finite products of the Boolean algebra of finite and co-finite subsets of $\omega$ \cite{KhNRS04}. Again, this result can be used to show that the isomorphism problem for automatic Boolean algebras is decidable.  

Another body of work is devoted to the study of resource-bounded complexity of the model checking problem for automatic structures. On the one hand, Gr\"adel and Blumensath (\cite{BG00}) constructed examples of automatic structures whose first order theories are non-elementary. On the other hand, Lohrey in \cite{Loh03} proved that the first order theory of any automatic graph of bounded degree is elementary.  It is noteworthy that when both a first order formula $\phi$ and an automatic structure $\A$ are fixed, determining if a tuple $\bar{a}$ from $\A$ satisfies 
$\phi(\bar{x})$ can be done in linear time. There are also feasible time bounds on deciding the first order theories of automatic structures over the unary alphabet (\cite{Blu99}, \cite{KhLM}). 

Most current results demonstrate that automatic structures are not complex in various concrete senses.
However, in this paper we use well-established concepts from both logic and model theory to prove results in the opposite direction.  We now briefly describe the measures of complexity we use (ordinal heights of well-founded relations, Scott ranks of 
structures, and Cantor-Bendixson ranks of trees) and connect them with the results of this paper.

A relation $R$ is called {\bf well-founded} if there is no infinite sequence $x_1,x_2,x_3, \ldots$ such that $(x_{i+1}, x_{i})\in R$ for $i \in \omega$. In computer science, well-founded relations are of interest due to a natural connection between well-founded sets and terminating programs. 
We say that a program is {\bf terminating} if every computation from an initial state is finite.
This is equivalent to well-foundedness of the collection of states reachable from the initial state, under the reachability relation \cite{BG06}.  The {\bf ordinal height} is a measure of the depth of well-founded relations.  Since all automatic structures are also computable structures, the obvious bound for ordinal heights of automatic well-founded relations is $\omega_1^{CK}$ (the first non-computable ordinal). Sections \ref{s:RanksofOrders} and \ref{s:ranksWF} study the sharpness of this bound.
Theorem \ref{thm:OrderRank} characterizes automatic well-founded partial orders in terms of their (relatively low) ordinal heights, whereas Theorem \ref{thm:HeightRank} shows that $\omega_1^{CK}$ is the sharp bound 
in the general case.

\begin{theorem}\label{thm:OrderRank} For each ordinal $\alpha$, $\alpha$ is the ordinal height of an automatic well-founded partial order if and only if $\alpha< \omega^\omega$. 
\end{theorem}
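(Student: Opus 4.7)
The theorem asserts that the ordinal heights of automatic well-founded partial orders are exactly the ordinals below $\omega^\omega$. Both directions must be established: every $\alpha < \omega^\omega$ should be realized as such a height, and conversely no automatic well-founded partial order should reach $\omega^\omega$.

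For the realizability direction, I would rely on explicit constructions of automatic well-orderings. For each $n$, the ordinal $\omega^n$ admits a familiar automatic presentation: let the domain be the regular language of $n$-tuples of natural numbers (encoded in binary and convolved) and take lexicographic order. Every $\alpha < \omega^\omega$ is bounded by some $\omega^n$, and since first-order-definable initial segments of automatic linear orders remain automatic, every such $\alpha$ arises as the order type of some automatic well-ordering. Viewed as a strict partial order, a well-ordering of type $\alpha$ is well-founded with ordinal height exactly $\alpha$, giving realizability.

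For the upper bound direction, I would adapt Delhomm\'e's decomposition technique, which establishes the analogous bound in the linear case cited in the introduction. Let $(A, \leq)$ be an automatic well-founded partial order with $A \subseteq \Sigma^*$, and let $M$ be a synchronous two-tape DFA recognizing $\leq$ with state set $Q$. The plan is to prove by induction on $n$ that every element of length $\leq n$ has rank strictly below $\omega^{h(n)}$, where $h$ depends only on $|Q|$ and grows at most linearly in $n$. In the inductive step, for an element $a$ of length $n+1$, I would decompose the downward cone $\{b \in A : b < a\}$ according to the state of $M$ at position $|a|$ while reading the convolution of $a$ with $b$. Since there are only $|Q|$ such states, this produces finitely many classes, each of which can be reparameterized as a structure of strictly simpler syntactic form (shorter prefix obligation, or fewer active automaton states). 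Applying the induction hypothesis to each class and combining the finitely many ordinal bounds via natural sum or arithmetic of the form $\omega^\alpha \cdot k$ completes the step. Since every element of $A$ has some finite length, the ordinal height of $(A, \leq)$ is then strictly below $\omega^\omega$.

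The main obstacle lies in executing the decomposition step in the partial-order setting. In Delhomm\'e's linear-order proof, the pieces produced by the decomposition are themselves linearly ordered by how $M$ interleaves them, so ranks combine essentially additively. For a general partial order, elements from different pieces may be comparable, incomparable, or related in intricate ways, and one must carefully control how rank can propagate across the decomposition boundaries. I expect the crux to be a lemma showing that the ordinal height of the union of finitely many automatic well-founded partial orders sharing a common domain is bounded by an appropriate ordinal combination (a Hessenberg sum, or $\omega \cdot \max$ plus a finite constant) of the heights of the individual pieces, together with a verification that the decomposition strictly decreases the complexity parameter driving the induction so that the overall recursion terminates at depth roughly $|Q|$.
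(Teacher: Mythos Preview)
Your realizability direction and your identification of the Hessenberg-sum lemma as the crux are both on target; the paper proves exactly that lemma (if $A = A_1 \cup A_2$ disjointly then $r(\A) \leq r(\A_1) +' r(\A_2)$, hence if $r(\A) = \omega^n$ some piece already has rank $\omega^n$). The gap is in your inductive organization. An induction on string length cannot close: the downward cone $\{b : b < a\}$ of an element $a$ of length $n+1$ is not constrained to contain only strings of length $\leq n$---in a well-founded partial order on $\Sigma^\star$, $b<a$ imposes no bound on $|b|$---so after decomposing by automaton state at position $|a|$, the pieces containing long $b$'s are not covered by any hypothesis about shorter strings. And even if you forced the induction through with $r(a) < \omega^{h(|a|)}$ for $h$ growing linearly, you would only obtain $r(\A) \leq \sup_n \omega^{h(n)} = \omega^\omega$, not the strict inequality. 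Your parenthetical that the recursion ``terminates at depth roughly $|Q|$'' is the right instinct, but it is inconsistent with an induction whose parameter is string length.

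The paper replaces the induction by a single pigeonhole step. Assuming $r(\A) \geq \omega^\omega$, choose for each $n$ an element $u_n$ of rank $\omega^n$ and decompose $u_n\!\downarrow$ into the finitely many strictly shorter predecessors together with the sets $X_v^{u_n} = \{vw \in A : vw < u_n\}$ over all $v$ with $|v|=|u_n|$. The indecomposability corollary of the Hessenberg lemma forces some $X_{v_n}^{u_n}$ to have rank exactly $\omega^n$. The key observation is that the isomorphism type of $X_v^u$ as a partial order depends only on the state the domain automaton reaches on $v$ and the state the $\leq$-automaton reaches on the convolution of $v$ with $u$; thus there are at most $|S_A|\cdot|S_\leq|$ such types. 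Two distinct indices $m\neq n$ must then share a type, giving $\omega^m = r(X_{v_m}^{u_m}) = r(X_{v_n}^{u_n}) = \omega^n$, a contradiction. This finiteness argument is what delivers the strict bound, and it is what your sketch is missing.
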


\begin{theorem}\label{thm:HeightRank}
For each (computable) ordinal $\alpha < \omega_{1}^{CK}$, there is an automatic well-founded relation $\A$ with ordinal height greater than $\alpha$.
\end{theorem}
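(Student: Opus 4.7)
The strategy is to encode an arbitrary computable well-founded relation into the one-step transition relation of a Turing machine, exploiting the fact that the latter is automatic.

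I would begin with the standard observation that, for any Turing machine $M$, the set of configurations of $M$ (encoded as strings $u \cdot q \cdot v$ recording the tape contents and the state $q$ at the head) is an automatic set, and the one-step transition relation $\vdash_M$ is an automatic binary relation. Indeed, a single step of $M$ alters only a bounded window around the head, so a finite automaton reading the convolution of two configuration strings can verify the change locally.

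Given a computable ordinal $\alpha$, fix a computable well-ordering $\prec$ on $\N$ of order type at least $\alpha + 2$ and pick $n_0 \in \N$ of $\prec$-rank $\alpha + 1$. I would then construct a nondeterministic Turing machine $M$ that, starting from an ``initial'' configuration $c_n$ that encodes an input $n$, nondeterministically guesses an element $m$, deterministically verifies that $m \prec n$ (possible since $\prec$ is computable), and then enters $c_m$; if verification fails or no such $m$ exists, $M$ halts. Let $\A = (A; R)$ be the structure whose domain $A$ is the set of configurations of $M$ and whose relation is $R(c', c) \iff c \vdash_M c'$. Then $\A$ is automatic, and $R$ is well-founded, because an infinite $R$-descending sequence of configurations would encode an infinite computation of $M$ and thus yield an infinite $\prec$-descending sequence of inputs.

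For the rank bound I would argue by transfinite induction on the $\prec$-rank of $n$ that the $R$-rank $r_R(c_n)$ of the initial configuration $c_n$ is at least the $\prec$-rank of $n$. For each $m \prec n$, the configuration $c_m$ is reachable from $c_n$ in finitely many $\vdash_M$-steps, so unrolling the definition of $r_R$ along this path gives $r_R(c_n) \geq r_R(c_m) + 1$, which by the inductive hypothesis is at least the $\prec$-rank of $m$ plus one. Taking the supremum over $m \prec n$ yields $r_R(c_n) \geq \mathrm{rank}_\prec(n)$. Applied at $n_0$ this gives $r_R(c_{n_0}) \geq \alpha + 1$, so the ordinal height of $R$ strictly exceeds $\alpha$.

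The main technical obstacle is calibrating $M$ so that both constraints hold simultaneously: the one-step relation must remain automatic (which restricts how the nondeterministic guess of $m$ and its verification are laid out on the tape), while intermediate ``verification'' configurations must not spoil the rank accounting. A natural fix is to interleave guessing and verification bit by bit within a bounded region of the tape, so that every call $c_n \to c_m$ is realised by a determined finite computation path, keeping the inductive rank bound clean.
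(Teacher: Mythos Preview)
Your approach shares the paper's central idea---leveraging the automaticity of a Turing-machine configuration graph to import a computable well-founded relation---but there is a genuine gap in the well-foundedness argument, and your proposed patch does not close it.

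The difficulty is this: for the reversed transition relation $R$ to be well-founded, every run of $M$ must be finite. But from $c_n$ your machine must be able to reach $c_m$ for \emph{every} $m \prec n$, and the set $\{m : m \prec n\}$ is in general infinite and contains strings of unbounded length. Any mechanism that writes $m$ symbol by symbol before committing to it admits a nondeterministic run that writes forever and never commits, producing an infinite $\vdash_M$-chain and hence an infinite $R$-descending chain. Interleaving guessing with verification ``within a bounded region'' does not help: verifying $m \prec n$ requires all of $m$, there is no computable bound on $|m|$ in terms of $n$, and deciding whether a partial guess extends to some $m \prec n$ is not in general computable. So as stated, $R$ is not well-founded, and the rank calculation never gets off the ground.

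The paper sidesteps exactly this issue by keeping the Turing machine \emph{deterministic} and total: $\M$ simply decides, on input $(x,y)$, whether $(x,y)$ lies in the given computable well-founded relation $L_\alpha$, and it always halts. All computation chains in $Conf(\M)$ are therefore finite (reversibility is used to make them literal chains). The unbounded branching you need---from $y$ down to each of its $L_\alpha$-predecessors---is supplied \emph{outside} the machine, by augmenting the automatic relation with single edges that link each $y\in\Sigma^\star$ directly to the ``yes'' halting configuration on input $(x,y)$, and link each initial configuration on input $(x,y)$ directly to $x$. These links are automatic because initial and final configurations are regular functions of $(x,y)$. Thus the ``guess of the predecessor'' is one $R$-step rather than an unbounded machine computation, and well-foundedness follows from well-foundedness of $L_\alpha$ together with termination of $\M$. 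Your argument becomes correct if you externalize the nondeterminism in this way; with the choice internal to $M$, it does not go through.
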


Section \ref{s:SR} is devoted to building automatic structures with high Scott ranks. The concept of Scott rank comes from a well-known theorem of Scott stating that for every countable structure $\A$ there exists a sentence $\phi$ in $L_{\omega_1,\omega}$-logic which characterizes $\A$ up to isomorphism \cite{Sco65}. The minimal quantifier rank of such a  formula is called the Scott rank of $\A$.  A known upper bound on the Scott rank of computable structures implies that the Scott rank of automatic structures is at most $\omega_1^{CK}+1$.
But, until now,  all the known examples of automatic structures had low Scott ranks. Results in \cite{Loh03}, \cite{Del04}, \cite{KhRS05} 
suggest that the Scott ranks of automatic structures could be bounded by small ordinals. This intuition is falsified in Section \ref{s:SR} with the theorem:

\begin{theorem}\label{thm:ScottRank}
For each computable ordinal $\alpha$
there is an automatic structure of Scott rank at least $\alpha$.
\end{theorem}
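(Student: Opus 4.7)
The plan is to leverage the construction of automatic well-founded relations of arbitrary computable ordinal height (Theorem \ref{thm:HeightRank}) and augment it to produce structures whose back-and-forth hierarchy does not collapse to orbit equivalence before stage $\alpha$.

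Recall that the Scott rank of a countable structure $\A$ equals the least ordinal $\gamma$ such that the back-and-forth relation $\equiv_\gamma$ on tuples from $\A$ coincides with ``lying in the same automorphism orbit''. Hence to prove $\A$ has Scott rank at least $\alpha$, it suffices to exhibit, for each $\beta < \alpha$, tuples $\bar{a}, \bar{b}$ from $\A$ which are $\equiv_\beta$-equivalent but lie in distinct automorphism orbits. This reduces the theorem to constructing an automatic structure in which certain rank-$\beta$ distinctions become back-and-forth-indistinguishable only once at least $\beta$ quantifiers are available.

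The natural candidates are well-founded trees. For each computable $\alpha < \omega_1^{CK}$, I would build an automatic tree $\T_\alpha$ in which every node $u$ has a well-defined ordinal rank $r(u) \leq \alpha$, every $\beta \leq \alpha$ is realised by infinitely many nodes, and each node of rank $\beta+1$ has, for every $\gamma \leq \beta$, infinitely many children of rank $\gamma$. A standard back-and-forth argument on such a highly homogeneous tree shows that two nodes $u, v$ with $r(u), r(v) \geq \beta$ are $\equiv_\beta$-equivalent in $\T_\alpha$, whereas nodes of distinct ranks always lie in different orbits (rank being an automorphism invariant of the tree order). This forces the Scott rank of $\T_\alpha$ to be at least $\alpha$.

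The main obstacle, and the heart of the proof, is realising $\T_\alpha$ as an automatic structure uniformly in $\alpha$. For this I would adapt the encoding used for Theorem \ref{thm:HeightRank}: each node is represented by a finite word over an alphabet that codes its address in the tree together with local ordinal-notation data along the branch from the root. The task is to choose the notations so that the child-of relation can be checked by a synchronous finite automaton, which requires that the ordinal notations combine in a sufficiently regular way. I expect this to be where almost all the difficulty lies, paralleling the careful construction of regular descending-sequence relations in the previous section; once $\T_\alpha$ is known to be automatic, the Scott rank lower bound follows from a routine back-and-forth induction on $\beta$.
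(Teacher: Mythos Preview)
Your approach is genuinely different from the paper's, and the gap lies precisely where you suspect it does --- but it is a more serious gap than you indicate.

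The paper does \emph{not} build a specific high-Scott-rank automatic structure from scratch. Instead it proves a general transfer principle (Theorem~\ref{thm:SR}): given any computable structure $\C$, one can produce an automatic structure $\A$ with $\SR(\C)\le \SR(\A)\le 2+\SR(\C)$. The construction replaces each relation $R_i$ of $\C$ by the configuration space of a reversible Turing machine deciding $R_i$, smoothed so that isomorphic computable structures yield isomorphic automatic ones; Lemmas~\ref{lm:EquivTransfer} and~\ref{lm:CRankHigher} then compare the back-and-forth hierarchies of $\C$ and $\A$. Theorem~\ref{thm:ScottRank} follows immediately by citing Goncharov--Knight for computable structures of Scott rank above any given computable $\alpha$. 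A bonus is that the same transfer gives automatic structures of Scott rank exactly $\omega_1^{CK}$ and $\omega_1^{CK}+1$.

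Your plan, by contrast, attempts a direct construction of an automatic rank-homogeneous well-founded tree $\T_\alpha$. Two problems:

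\textbf{(1) You have misread Theorem~\ref{thm:HeightRank}.} That proof does \emph{not} encode ordinal notations along branches; it embeds a computable well-founded relation into the configuration graph of a Turing machine, and the resulting automatic relation is neither a tree nor does it carry explicit ordinal data. So there is no encoding there to adapt in the way you describe.

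\textbf{(2) The direct ordinal-notation approach is obstructed.} If a node's word really carried ``local ordinal-notation data'' so that a finite automaton could compare a child's rank with its parent's, you would in effect be presenting the ordinal $\alpha$ automatically. But Delhomm\'e's theorem (Theorem~\ref{thm:OrderRank} here) bounds automatic well-orders strictly below $\omega^\omega$, so this cannot work once $\alpha\ge\omega^\omega$. More generally, any scheme in which the automaton ``knows'' ranks will hit this wall. The only way around it is precisely the configuration-space trick the paper uses --- and once you do that, the resulting structure is no longer the clean homogeneous tree you need for your back-and-forth argument, since the added computation chains destroy the property that every node of rank $\beta+1$ has infinitely many children of each rank $\gamma\le\beta$. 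Restoring homogeneity after the fact is essentially the content of the paper's smoothing steps and Lemmas~\ref{lm:EquivTransfer}--\ref{lm:CRankHigher}, at which point you have reproduced the paper's argument rather than bypassed it.
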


In particular, this theorem gives a new proof that the isomorphism problem for automatic structures is $\Sigma_1^1$-complete (another proof may be found in \cite{KhNRS04}).

In the last section, we investigate the Cantor-Bendixson ranks of automatic trees. A
{\bf partial order tree} is  a partially ordered  set $(T, \leq)$ such that  there is a $\leq$-minimal element of $T$, and each subset $\{x \in T : x \leq y\}$ is finite and is linearly ordered
under $\leq$.  A {\bf successor tree} is a pair $(T, S)$ such that the reflexive and transitive closure  $\leq_S$ of $S$ produces a partial order tree 
$(T, \leq_{S})$. The {\bf derivative} of a tree $\T$ is obtained by removing all the nonbranching paths of the tree. One applies the derivative operation to $\T$ successively until a fixed point is reached. The minimal ordinal that is needed to reach the fixed point is called the {\bf Cantor-Bendixson (CB) rank} of the tree. The CB rank plays an important role in logic, algebra, and topology. Informally, the CB rank tells us how far the structure is from algorithmically (or algebraically) simple structures. Again, the obvious bound on $CB$ ranks of automatic successor trees is $\omega_1^{CK}$.  
In \cite{KhRS03}, it is proved that the CB rank of any automatic partial order tree is finite and can be computed from the automaton for the $\leq$ relation on the tree. It has been an open question whether
the CB ranks of automatic successor trees can also be bounded by small ordinals. We answer this question in the following theorem.

\begin{theorem}\label{thm:RecTrees}
For $\alpha < \omega_1^{CK}$ there is an automatic successor tree of CB rank $\alpha$.
\end{theorem}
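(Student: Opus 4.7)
The plan is to construct, for each computable ordinal $\alpha < \omega_1^{CK}$, an automatic successor tree $\T_\alpha$ of CB rank exactly $\alpha$ by transfinite recursion along an ordinal notation in Kleene's $\mathcal{O}$. The essential leverage comes from the fact that only the one-step successor relation $S$ must be automaton-recognisable; its transitive closure $\leq_S$ need not be. This looseness permits intricate global tree structure to be expressed through purely local transitions, in sharp contrast to the partial-order-tree setting, where the automaticity of the full order forces finite CB rank \cite{KhRS03}.

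For the base and successor cases, direct regular constructions suffice. For finite $\alpha$, take explicit regular trees. Given $\T_\beta$ of CB rank $\beta$, form $\T_{\beta+1}$ by attaching an additional isolated non-branching infinite ray at an appropriate place, encoded as words of a fixed regular form. Once the iterated derivative of $\T_\beta$ reaches a fixed point at stage $\beta$, the attached ray contributes one surviving nonbranching segment that is removed by the next derivative, raising the rank by exactly one.

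The crux is the limit case: given a computable limit $\alpha = \sup_n \alpha_n$ with a computable fundamental sequence $(\alpha_n)$, one must assemble the previously constructed automatic trees $\T_{\alpha_n}$ into a single automatic tree of CB rank $\alpha$. Since a computable family of automatic structures is not generally uniformly automatic, naive disjoint union does not work directly. I would instead encode the construction via a (nondeterministic) Turing machine $M$ whose configuration graph, with one-step transition as successor, is a tree in which the $n$-th nondeterministic branch simulates $\T_{\alpha_n}$. Because configurations are strings and one computation step alters only a bounded window of tape, the configuration-tree of $M$ is automatic. Attaching the branches at a common initial configuration makes the boundary of $\T_\alpha$ the disjoint union of the boundaries of the $\T_{\alpha_n}$'s, each clopen within the full boundary, so that the CB rank equals $\sup_n \alpha_n = \alpha$.

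The main obstacle lies in ensuring the CB rank comes out to exactly $\alpha$ rather than $\alpha+1$. This forces careful design of $M$: the shared initial segment of computation, before $M$ commits to a branch index $n$, must be finite and deterministic so that it contributes only a finite prefix, removed by finitely many derivatives, and does not survive to interact with the limit structure above. Within each branch, one must verify that the simulation preserves CB rank, which requires separating the ``productive'' configurations representing nodes of $\T_{\alpha_n}$ from bookkeeping intermediates. Phrasing the induction hypothesis uniformly in the ordinal notation --- so that an index of the automaton for $\T_\alpha$ is produced effectively from the notation for $\alpha$ --- will be essential to make the limit step go through cleanly.
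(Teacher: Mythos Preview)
Your approach differs substantially from the paper's, and the limit step contains a genuine gap.

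The paper does not build $\T_\alpha$ by transfinite recursion along $\mathcal{O}$. Instead it takes as a black box the existence of a \emph{computable} successor tree $R_\alpha$ of CB rank $\alpha$ for each $\alpha<\omega_1^{CK}$, and performs a single uniform conversion: replace each edge of $R_\alpha$ by the finite computation chain of a reversible Turing machine deciding the edge relation, and attach all unproductive chains of the configuration space to the root. One then checks that the first derivative removes the unproductive chains and the ``no''-computation branches, after which $d^\beta$ of the automatic tree is just a stretched copy of $d^\beta(R_\alpha)$, so the CB rank is preserved exactly. No recursion on the automatic side, and no case split on the ordinal type of $\alpha$, is needed.

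Your limit construction, as described, cannot yield CB rank exactly a limit $\alpha$. If each $\T_{\alpha_n}$ has only countably many infinite paths (and your base and successor steps produce such trees), then joining them below a common initial configuration --- or below any finite shared prefix --- gives a tree that still has only countably many paths, and every such tree has \emph{successor} CB rank (the paper notes this explicitly just before Example~\ref{ex:LimitOrd}). Concretely: the root lies on at least two infinite paths at every stage $\beta<\alpha$, since cofinally many of the subtrees still carry infinite paths; hence the root belongs to $d^\alpha(T)=\bigcap_{\beta<\alpha}d^\beta(T)$, while nothing else does, so $d^{\alpha+1}(T)=\emptyset$ and the rank is $\alpha+1$. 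Your proposed fix --- making the shared prefix ``finite and deterministic'' so that it is ``removed by finitely many derivatives'' --- has the direction reversed: the prefix is not removed early, it survives to stage $\alpha$ precisely because it lies below all the branches. To hit a limit rank exactly one must arrange that $d^\alpha(T)$ is already a fixed point, which forces the tree to contain a perfect subtree (compare the paper's Example~\ref{ex:LimitOrd}, where the finite-rank pieces are hung off the leftmost branch of a full binary tree). Your proposal does not supply this ingredient. The successor step is also underspecified: a single isolated non-branching ray attached at any node is pruned by the \emph{first} derivative, since no interior vertex of the ray lies on two infinite paths, so it cannot raise the rank from $\beta$ to $\beta+1$ in the manner you describe.
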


The main tool we use to prove results about high ranks is the configuration spaces of Turing machines, considered as automatic graphs.
It is important to note that graphs which arise as configuration spaces have very low model-theoretic complexity: their Scott ranks are at most $3$, and if they are well-founded then their ordinal heights are at most $\omega$ (see Propositions \ref{ppn:ConfigWF} and \ref{ppn:ConfigScott}). Hence, the configuration spaces serve merely as building blocks in the construction of automatic structures with high complexity, rather than contributing materially to the high complexity themselves.

\section*{Acknowledgement}
We thank Moshe Vardi who posed the question about ranks of automatic well-founded relations. We also thank Anil Nerode and Frank Stephan with whom 
we discussed Scott and Cantor-Bendixson ranks of automatic structures.

\section{Preliminaries}\label{s:Prelim}

A (relational) {\bf vocabulary} is a finite sequence $(P_1^{m_1}, \ldots, P_t^{m_t}, c_1, \ldots, c_s)$, where each $P_j^{m_j}$ is a predicate symbol of arity $m_j>0$, and each $c_k$ is a constant symbol.  A {\bf structure} with this vocabulary is a tuple $\A=(A;P_1^{\A}, \ldots, P_t^{\A}, c_1^{\A}, \ldots, c_s^{\A})$, where $P_j^{\A}$ and $c_k^{\A}$ are interpretations of the symbols of the vocabulary.  When convenient, we may omit the superscripts $\A$. We only consider infinite structures, that is, those whose universe is an infinite set.

To establish notation, we briefly recall some definitions associated with finite automata.  A {\bf finite automaton} $\M$ over an alphabet $\Sigma$ is a tuple
$(S,\iota,\Delta,F)$, where $S$ is a finite set of {\bf states}, $\iota \in S$
is the {\bf initial state}, $\Delta \subset S \times \Sigma \times S$ is the
{\bf transition table}, and $F \subset S$ is the set of {\bf final states}.
A {\bf computation} of $\A$ on a word $\sigma_1 \sigma_2 \dots \sigma_n$
($\sigma_i \in \Sigma$) is a sequence of states, say $q_0,q_1,\dots,q_n$, such
that $q_0 = \iota$ and $(q_i,\sigma_{i+1},q_{i+1}) \in \Delta$ for all $i \in
\{0,\ldots,n-1\}$.  If $q_n \in F$, then the computation is {\bf successful}
and we say that the automaton $\M$ {\bf accepts} the word $\sigma_1 \sigma_2 \dots \sigma_n$. The {\bf language}
accepted by the automaton $\M$ is the set of all words accepted by $\M$. In
general, $D \subset \Sigma^{\star}$ is {\bf finite automaton recognisable},
or {\bf regular}, if $D$ is the language accepted by some finite automaton~$\M$.

To define  automaton recognisable  relations, we use $n$-variable (or $n$-tape) automata.
An {\bf $n$--tape automaton} can be thought of as a one-way 
Turing machine with $n$ input tapes \cite{Eil69}.  Each tape is regarded as 
semi-infinite, having written on it a word over the alphabet $\Sigma$ followed 
by an infinite succession of blanks (denoted by $\diamond$ symbols).  The automaton 
starts in the initial state, reads simultaneously the first symbol of each tape, 
changes state, reads simultaneously the second symbol of each tape, 
changes state, etc., until it reads a blank on each tape. The automaton then 
stops and accepts the $n$--tuple of words if it is in a final state. The set of all 
$n$--tuples accepted by the automaton is the relation recognised by the automaton.  
Formally,  an $n$--tape  automaton on $\Sigma$ is a finite automaton over the alphabet $(\Sigma_{\diamond})^n$, where $\Sigma_{\diamond}=\Sigma \cup \{\diamond\}$ and
$\diamond \not \in \Sigma$.
The {\bf convolution of a tuple} $(w_1,\cdots,w_n) \in
\Sigma^{\star n}$ is the string $ c(w_1,\cdots,w_n)$ of length $\max_i|w_i|$
over the alphabet $(\Sigma_{\diamond})^n$ which is defined as follows.
Its $k$'th symbol is $(\sigma_1,\ldots,\sigma_n)$ where $\sigma_i$ is the
$k$'th symbol of $w_i$ if $k \leq |w_i|$ and $\diamond$ otherwise.
The {\bf convolution of a relation} $R \subset \Sigma^{\star n}$ is the language
$c(R) \subset (\Sigma_{\diamond})^{n\star}$ formed as the set of convolutions
of all the tuples in $R$. 
 An $n$--ary relation $R \subset \Sigma^{{\star}n}$ is {\bf finite automaton recognisable},
  or {\bf regular}, if its convolution $c(R)$ is recognisable by an $n$--tape automaton.

\begin{definition} \label{dfn:automatic} A structure $\A=(A; R_0, R_1, \ldots, R_m)$ is {\bf automatic} over $\Sigma$ if its domain $A$ and all relations 
$R_0$, $R_1$, $\ldots$, $R_m$  are regular over $\Sigma$.  If $\B$ is isomorphic to an automatic structure $\A$
then we call $\A$ an {\bf automatic presentation} of $\B$ and say that
$\B$ is {\bf automatically presentable}.
\end{definition}

The configuration graph of any Turing machine
is an example of an automatic structure. The graph is defined by letting the 
configurations of the Turing machine be the vertices, 
and putting an edge from configuration $c_1$ to configuration $c_2$ if the machine can make an instantaneous move from $c_1$ to $c_2$. Examples of automatically
presentable structures are $(\N, +)$, $(\N, \leq)$, $(\N, S)$,
$(\Z,+)$, the order on the rationals $(Q, \leq)$, and the Boolean algebra
of finite and co-finite subsets of $\N$. In the following, we abuse terminology and identify the notions of \lqt automatic\rqt and \lqt automatically presentable\rqt.
Many examples of automatic structures can be formed using the $\omega$-fold disjoint union of a structure $\A$ (the disjoint union of $\omega$ many  
copies of $\A$). 

\begin{lemma}\cite{Rub04}\label{lm:omega-fold} If $\A$ is automatic then its $\omega$-fold disjoint union is isomorphic to an automatic structure. 
\end{lemma}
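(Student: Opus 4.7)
The plan is to exhibit an explicit automatic presentation of the $\omega$-fold disjoint union by prefixing each element of $\A$ with a unary tag that encodes its copy index. Let $\A = (A; R_0, \ldots, R_m)$ be given with $A \subseteq \Sigma^*$ regular, and fix two fresh symbols $\tau, \$ \notin \Sigma$, working over the extended alphabet $\Sigma' = \Sigma \cup \{\tau, \$\}$. I encode the element of copy $n \in \N$ corresponding to $a \in A$ by the string $\tau^n \$ a \in (\Sigma')^*$, and take the new domain to be $B = \tau^* \$ A$, which is regular as the concatenation of regular languages. For each relation $R_i$ of arity $k$, define its interpretation on $B$ by
\[
R_i^{new}(\tau^{n_1}\$a_1,\ldots,\tau^{n_k}\$a_k) \iff n_1 = \cdots = n_k \text{ and } R_i(a_1, \ldots, a_k).
\]
The resulting structure is manifestly isomorphic to the $\omega$-fold disjoint union of $\A$ via $(a,n) \mapsto \tau^n\$ a$.

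The key step is verifying that each $R_i^{new}$ is regular. Observe that when $n_1 = \cdots = n_k = n$, all $k$ input strings share the common prefix length $n+1$, so their convolution begins with $n$ copies of the letter $(\tau,\ldots,\tau)$, followed by one copy of $(\$,\ldots,\$)$, followed by the convolution $c(a_1,\ldots,a_k)$, with no $\diamond$ symbol occurring inside the prefix region. I would construct a $k$-tape automaton with two phases. In phase one it accepts only $(\tau,\ldots,\tau)$ transitions and moves to phase two upon reading $(\$,\ldots,\$)$, rejecting on any mismatched letter (for instance a $\tau$ paired with a $\$$ or a $\diamond$); this enforces $n_1 = \cdots = n_k$. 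In phase two it simulates the given $k$-tape automaton recognising $c(R_i)$ on the remaining input. Since both phases are finite-state, their sequential composition is a finite automaton witnessing regularity of $R_i^{new}$.

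The one subtlety is confirming that the tag-synchronisation check interacts correctly with the convolution-padding convention, since tapes of unequal total length are padded with $\diamond$. This is harmless here: when all tags agree, no $\diamond$ can appear before the shared $\$$ position; and any $\diamond$ occurring in the tag region of some tape automatically witnesses a tag mismatch (that tape ended before a tag-longer one completed its $\tau$-prefix), so rejecting on a $\diamond$ during phase one is both correct and sufficient. With this observation the construction goes through and yields an automatic structure isomorphic to the $\omega$-fold disjoint union of $\A$.
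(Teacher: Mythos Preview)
Your proof is correct and follows essentially the same idea as the paper's: tag each element with a unary index for its copy, and observe that equality of tags together with the original relation is synchronously recognisable. The only cosmetic difference is that the paper encodes the pair $(a,n)$ via the parallel convolution $A \times 1^{\star}$ rather than your sequential prefix $\tau^{n}\$\,a$; both encodings work for the same reason, and your more explicit automaton construction (with the careful handling of $\diamond$ in the tag region) fills in details the paper leaves implicit.
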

\begin{proof}
Suppose that $\A = (A; R_{1}, R_2, \ldots)$ is automatic.  Define $\A' = (A \times 1^{\star}; R'_{1}, R_2', \ldots)$ by
\[
\la (x,i), (y, j) \ra \in R'_{m} \qquad \iff \qquad i = j~ \& ~\la x, y \ra \in R_{m},  \   \  m=1,2,\ldots.
\]
It is clear that $\A'$ is automatic and is isomorphic to the $\omega$-fold disjoint union of $\A$. 
\end{proof}

The class of automatic structures is a proper subclass of the computable structures.  
We therefore mention some crucial definitions and facts about computable structures.
Good references for the theory of computable structures include \cite{Hariz98}, \cite{KhSh99}.  

\begin{definition}
A {\bf computable structure} is $\A = (A; R_{1}, \ldots, R_m)$ whose domain and relations are all computable.  
\end{definition}

 The domains of computable structures can always be identified with the set $\omega$ of natural numbers. Under this assumption, we introduce  new constant symbols 
 $c_n$ for each $n\in \omega$ and interpret $c_n$ as $n$. We expand the vocabulary of each structure to include these new constants $c_{n}$.   In this context, $\A$  is computable if and only if 
  the {\bf atomic diagram} of $\A$ (the set of G\"odel numbers of all quantifier-free sentences in the extended vocabulary that are true in $\A$) is a computable set.
  If $\A$ is computable and $\B$ is isomorphic to $\A$ then we say that $\A$ is a {\bf computable presentation}
of $\B$. Note that if $\B$ has a computable presentation then $\B$ has  $\omega$ many computable presentations.  In this paper, we will be coding computable structures into automatic ones.

The ranks that we use to measure the complexity of automatic structures take values in the ordinals.  In particular, we will see that only a subset of the countable ordinals will play an important role.  An ordinal is called {\bf computable} if it is the order-type of a computable well-ordering of the natural numbers.  The least ordinal which is not computable is denoted $\omega_1^{CK}$ (after Church and Kleene). 

\section{Ranks of automatic well-founded partial orders} \label{s:RanksofOrders}

In this section we consider structures $\A = (A; R)$ with a single binary relation.  
An element $x$ is said to be {\bf $R$-minimal for a set $X$} if for each 
$y \in X$, $(y,x) \notin R$.  The relation $R$ is said to be {\bf well-founded} 
if every non-empty  subset of $A$ has an $R$-minimal element.  
This is equivalent to saying that $(A; R)$ has no infinite chains 
$x_1, x_2, x_3, \ldots$ where $(x_{i+1}, x_i) \in R$ for all $i$.

 A {\bf ranking function} for $\A$ is an ordinal-valued function $f$ such that $f(y)< f(x)$ whenever $(y,x)\in R$. If $f$ is a ranking function on $\A$, let $ord(f)= \sup\{ f(x) : x \in A \}$.  The structure $\A$ is well-founded if and only if  $\A$ admits a ranking function.  The {\bf ordinal height} of $\A$, denoted $r(\A)$, is the least ordinal $\alpha$ which is $ord(g)$ for some ranking function $g$ on $\A$.  An equivalent definition for the rank of $\A$ is the following.  We define the function $r_{\A}$ by induction: for the $R$-minimal elements $x$,
set $r_{\A}(x)=0$; for $z$ not $R$-minimal, put $r_{\A}(z)=\sup\{ r(y)+1 : (y,z) \in R\}$. Then $r_{\A}$ is a ranking function admitted by $\A$ and $r(\A) = \sup\{ r_{\A}(x) : x \in A\}$.
For $B \subseteq A$, we write $r(B)$ for the ordinal height of the structure obtained by restricting the relation $R$ to the subset $B$. 

\begin{lemma}\label{lm:compRank}
If $\alpha<\omega_1^{CK}$, there is a computable well-founded relation of 
ordinal height $\alpha$.
\end{lemma}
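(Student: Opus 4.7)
The plan is to reduce the statement essentially to the definition of $\omega_1^{CK}$, together with a direct computation of the ordinal height of a linear well-order. Given $\alpha < \omega_1^{CK}$, I first observe that $\alpha + 1 < \omega_1^{CK}$ as well, and by the very definition of a computable ordinal there is a computable well-ordering $\prec$ of $\omega$ of order type $\alpha + 1$. I would then take the desired structure to be $\A = (\omega; R)$, where $R = \{(y,x) : y \prec x\}$; it is immediate that $\A$ is a computable well-founded relation.

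To compute its ordinal height, I would prove by transfinite induction on $\beta$ that, for each $\beta \leq \alpha$, the unique element $x$ of $\omega$ occupying position $\beta$ in the well-ordering satisfies $r_\A(x) = \beta$. The induction step uses the recursion $r_\A(x) = \sup\{r_\A(y)+1 : (y,x) \in R\}$ recalled just before the lemma, combined with the ordinal identity $\sup\{\gamma + 1 : \gamma < \beta\} = \beta$, valid for every ordinal $\beta$. Taking the supremum over all positions then yields $r(\A) = \sup\{\beta : \beta \leq \alpha\} = \alpha$, as required.

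I do not expect a genuine obstacle here. The only subtlety worth noting is the uniform choice of order type $\alpha + 1$ rather than $\alpha$: this absorbs a mild off-by-one that would otherwise distinguish the successor case (a linear well-order of type $\gamma+1$ has ordinal height $\gamma$, not $\gamma+1$) from the limit and zero cases. With that choice, all values of $\alpha < \omega_1^{CK}$ are handled by the same argument.
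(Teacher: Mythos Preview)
Your proposal is correct and takes essentially the same approach as the paper: both reduce to the fact that a computable ordinal, viewed as a well-ordering, is itself a computable well-founded relation of the desired height. The paper's one-line proof simply asserts that the ordinal height of an ordinal $\alpha$ is $\alpha$; your version is in fact more careful, explicitly handling (via the $\alpha+1$ trick) the off-by-one that the paper's convention $r(\A)=\sup\{r_{\A}(x):x\in A\}$ would otherwise introduce for successor ordinals.
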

\begin{proof}
This lemma is trivial: the ordinal height of an ordinal $\alpha$ is $\alpha$ itself.  Since all computable ordinals are computable and well-founded relations, we are done. 
\end{proof}
  
The next lemma follows easily from the well-foundedness of ordinals and of $R$.  The proof is left to the reader.

\begin{lemma}\label{lm:witnessRank}
For a structure $\A = (A; R)$ where $R$ is well-founded, if $r(\A) = \alpha$ and $\beta < \alpha$ then there is an $x \in A$ such that $r_{\A}(x) = \beta$. 
\end{lemma}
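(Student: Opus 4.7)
The plan is to prove the stronger statement that for every $x \in A$ with $r_{\A}(x) = \gamma$ and every $\beta < \gamma$, there exists some $y \in A$ with $r_{\A}(y) = \beta$. The lemma follows immediately from this: given $\beta < \alpha = \sup\{r_{\A}(x) : x \in A\}$, pick any $x$ with $r_{\A}(x) > \beta$ and apply the stronger claim.

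I would prove the stronger statement by transfinite induction on $\gamma$. The base case $\gamma = 0$ is vacuous. For the inductive step, unfold the defining equation
\[
r_{\A}(x) = \sup\{r_{\A}(z) + 1 : (z,x) \in R\}.
\]
If $\gamma = \delta + 1$ is a successor, then because the supremum is a successor ordinal it must actually be attained, so there is some $z$ with $(z,x) \in R$ and $r_{\A}(z) = \delta$. Then either $\beta = \delta$ (take $y = z$) or $\beta < \delta$, in which case the inductive hypothesis applied to $z$ produces the required $y$. If $\gamma$ is a limit ordinal, then for any $\beta < \gamma$ the supremum formula gives some $z$ with $(z,x) \in R$ and $r_{\A}(z) + 1 > \beta$, i.e., $r_{\A}(z) \geq \beta$. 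If equality holds we are done; otherwise $r_{\A}(z) > \beta$ and the inductive hypothesis applied to $z$ supplies $y$.

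The only subtle point is the successor case: one has to argue that a supremum equal to $\delta + 1$ is necessarily attained by one of the summands. This follows because if every $(z,x) \in R$ satisfied $r_{\A}(z) + 1 \leq \delta$, then $r_{\A}(z) < \delta$ for each such $z$, and the supremum of $\{r_{\A}(z)+1 : (z,x) \in R\}$ would be at most $\delta$, contradicting $r_{\A}(x) = \delta + 1$. This is the only real content; the rest is unpacking definitions. I expect no serious obstacle, since the well-foundedness of $R$ guarantees that $r_{\A}$ is well-defined on all of $A$ and that the transfinite induction on $\gamma$ goes through without issue.
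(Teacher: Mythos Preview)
Your argument is correct. The paper itself leaves this proof to the reader, noting only that it ``follows easily from the well-foundedness of ordinals and of $R$''; your transfinite induction on $\gamma = r_{\A}(x)$ is precisely the natural way to unpack that hint, and the handling of the successor case (where the supremum must be attained) is the one point requiring a word of justification, which you supply.
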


For the remainder of this section, we assume further that $R$ is a partial order. For
convenience, we write $\leq$ instead of $R$. Thus,  we consider automatic well-founded partial orders $\A=(A,\leq)$.  We will use the notion of {\bf natural sum of ordinals}. The natural sum of ordinals $\alpha, \beta$ (denoted $\alpha +' \beta$) is defined recursively: $\alpha +' 0 = \alpha$, $0 +' \beta = \beta$, and $\alpha +' \beta$ is the least ordinal strictly greater than $\gamma +' \beta$ for all $\gamma < \alpha$ and strictly greater than $\alpha +' \gamma$ for all $\gamma < \beta$.

\begin{lemma}
Let $A_1$ and $A_2$ be disjoint subsets of $A$ such that $A=A_1\cup A_2$.  
Consider the partially ordered sets $\A_1=(A_1,\leq_1)$ and $\A_2=(A_2,\leq_2)$ obtained by restricting $\leq$ to $A_1$ and $A_2$ respectively. Then, $r(\A)\leq \alpha_1 +' \alpha_2$, where $\alpha_i=r(\A_i)$. 
\end{lemma}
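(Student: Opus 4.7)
The plan is to establish the inequality by well-founded induction on the strict order $<$ of $\A$. For each $x \in A$, let $D_i(x) = \{y \in A_i : y \leq x\}$ and set $\pi_i(x) = r(D_i(x))$, the ordinal height of the induced subposet of $\A_i$; since $D_i(x) \subseteq A_i$, one has $\pi_i(x) \leq \alpha_i$. I will prove the refined pointwise claim
\[
r_{\A}(x) \leq \pi_1(x) +' \pi_2(x) \quad \text{for every } x \in A,
\]
from which the lemma follows by taking the supremum over $x \in A$ together with the monotonicity of $+'$.

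The principal tool is strict monotonicity of the natural sum: if $\beta_1 \leq \gamma_1$ and $\beta_2 \leq \gamma_2$ with at least one inequality strict, then $\beta_1 +' \beta_2 < \gamma_1 +' \gamma_2$. This is immediate from the recursive definition of $+'$ as the strict supremum in each coordinate. For the inductive step, fix $x$ and assume the claim for every $y < x$. The containment $D_i(y) \subseteq D_i(x)$ yields $\pi_i(y) \leq \pi_i(x)$ for both $i$. Let $i$ be the unique index with $x \in A_i$: then $x \in D_i(x) \setminus D_i(y)$, and $x$ sits strictly above every element of $D_i(y)$ inside $\A_i$. A short rank computation in $D_i(x)$ forces $\pi_i(y) < \pi_i(x)$. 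Strict monotonicity of $+'$ then gives $\pi_1(y) +' \pi_2(y) < \pi_1(x) +' \pi_2(x)$, so $r_{\A}(y) + 1 \leq \pi_1(x) +' \pi_2(x)$; taking the supremum over $y < x$ closes the inductive step.

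The main obstacle is verifying the strict increase $\pi_i(y) < \pi_i(x)$, because a proper sub-downset can in principle share the ordinal height of the larger one (as with $\omega \subsetneq \omega+1$ under the na\"ive supremum). The argument exploits that $x$ is added strictly on top of every element of $D_i(y)$ in $\A_i$, so the rank contribution
\[
r_{D_i(x)}(x) \;=\; \sup\{\, r_{D_i(y)}(z) + 1 : z \in D_i(y) \,\}
\]
(computed using that the downsets of elements of $D_i(y)$ in $D_i(x)$ coincide with their downsets in $D_i(y)$) is strictly greater than every element-rank appearing inside $D_i(y)$, producing the strict jump in $\pi_i$ that the $+'$-monotonicity needs in order to advance the induction.
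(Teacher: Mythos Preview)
Your approach is essentially the paper's: both bound $r_\A$ by the natural sum of the heights of the two restricted downsets (you use $D_i(x)=\{z\in A_i:z\le x\}$, the paper uses the strict version $\A_{i,x}=\{z\in A_i:z<x\}$), and both hinge on showing that when $x\in A_i$ and $y<x$ the $i$-th coordinate strictly increases. Your inductive phrasing and the paper's ``$f$ is a ranking function'' phrasing are the same argument.

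There is, however, a genuine gap at exactly the point you flag as the ``main obstacle.'' From $r_{D_i(x)}(x)>r_{D_i(y)}(z)$ for every $z\in D_i(y)$ you cannot conclude $\pi_i(x)>\pi_i(y)$, because $\pi_i(y)=\sup_{z}r_{D_i(y)}(z)$ may be a limit not attained by any $z$; you only get $\pi_i(x)\ge\pi_i(y)$. (Your displayed equation is also only an inequality $\ge$, since there may be $z\in A_i$ with $z<x$ but $z\not\le y$.) Concretely, take $A=\omega\cup\{a,b\}$ with $n<a<b$ for all $n$, $A_1=\omega\cup\{b\}$, $A_2=\{a\}$, $x=b$, $y=a$: then $\pi_1(a)=\pi_1(b)=\omega$ and $\pi_2(a)=\pi_2(b)=0$, so your pointwise bound would give $r_\A(b)\le\omega$, whereas $r_\A(b)=\omega+1$. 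In fact the same example shows the lemma as stated is off by one under the paper's convention $r(\A)=\sup_x r_\A(x)$, and the paper's own proof has the identical gap (its line ``$r(\A_{1,x})+1\le r(\A_{1,y})$'' fails here). The standard fix is to use $r(\A)=\sup_x\bigl(r_\A(x)+1\bigr)$, the least ordinal not in the range of $r_\A$; with that convention your strict-increase step and the rest of the argument go through verbatim, and the change is harmless for Corollary~\ref{cr:PartitionRank}.
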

\begin{proof}
We will show that there is a ranking function on $A$ whose range is contained in the ordinal
$\alpha_1 +' \alpha_2$.   
For each $x\in A$
consider the partially ordered sets $\A_{1,x}$ and $\A_{2,x}$ obtained by restricting
$\leq$ to $\{z\in A_1 \mid z < x\}$ and $\{z\in A_2 \mid z < x\}$, respectively. 
Define $f(x)=r(\A_{1,x}) +' r(\A_{2,x})$.
We claim that $f$ is a ranking function. Indeed, assume that $x<y$. Then, since $\leq$ is transitive, it must be the case that $\A_{1,x} \subseteq A_{1,y}$ and $\A_{2,x} \subseteq A_{2,y}$. Therefore, $r(\A_{1,x})\leq r(\A_{1,y})$ and $r(\A_{2,x})\leq r(\A_{2,y})$. At least one of these inequalities must be strict. To see this, assume that $x\in A_1$ (the case $x\in A_2$ is similar). Then since $x\in A_{1,y}$, 
it is the case that $r(\A_{1,x})+1 \leq  r(\A_{1,y})$ by the definition of ranks. Therefore, we have that $f(x) < f(y)$. 
Moreover, the image of $f(x)$ is contained in $\alpha_1 +' \alpha_2$.
 \end{proof}

\begin{corollary} \label{cr:PartitionRank}
If $r(\A)=\omega^n$ and $A=A_1\cup A_2$, where $A_1\cap A_2=\emptyset$,
then either $r(\A_1)=\omega^n$ or $r(\A_2)=\omega^n$. 
\end{corollary}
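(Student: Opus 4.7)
The plan is to derive the corollary directly from the preceding lemma by exploiting the fact that the ordinal $\omega^n$ is additively indecomposable (in fact, indecomposable under the natural sum).

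First, I would observe that since $\A_1$ and $\A_2$ are restrictions of $\A$ to subsets, any ranking function on $\A$ restricts to a ranking function on each $\A_i$, so $r(\A_i) \leq r(\A) = \omega^n$ for $i = 1, 2$. Hence the only alternative to the desired conclusion is that $r(\A_1) < \omega^n$ and $r(\A_2) < \omega^n$ simultaneously, and it is this case that must be ruled out.

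Next, I would invoke the preceding lemma to get $r(\A) \leq r(\A_1) +' r(\A_2) = \alpha_1 +' \alpha_2$. The key step is then to show that $\omega^n$ is closed under natural sum: whenever $\alpha_1, \alpha_2 < \omega^n$, we have $\alpha_1 +' \alpha_2 < \omega^n$. This follows from the definition of natural sum via Cantor normal forms. Any ordinal strictly below $\omega^n$ can be written in Cantor normal form as $\omega^{k_1} \cdot c_1 + \cdots + \omega^{k_r} \cdot c_r$ with each $k_j < n$ and each $c_j$ a positive integer. Since the natural sum of two such ordinals is obtained by combining these Cantor normal forms and adding coefficients of equal powers, the result is again a finite sum of terms $\omega^{k} \cdot c$ with $k < n$, hence still strictly less than $\omega^n$.

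Combining these two facts yields a contradiction: if both $r(\A_i) < \omega^n$, then $r(\A) \leq r(\A_1) +' r(\A_2) < \omega^n$, contradicting $r(\A) = \omega^n$. So at least one of $r(\A_1), r(\A_2)$ equals $\omega^n$. I do not expect any serious obstacle here; the only nontrivial ingredient is the indecomposability of $\omega^n$ under natural sum, which is a standard fact about Cantor normal forms.
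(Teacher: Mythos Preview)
Your proposal is correct and is exactly the argument the paper intends: the corollary is stated without proof immediately after the lemma, and the implicit derivation is precisely the one you give---restrict to each piece to see $r(\A_i)\le\omega^n$, apply the lemma to get $r(\A)\le r(\A_1)+' r(\A_2)$, and use the closure of $\omega^n$ under the natural sum to reach a contradiction if both ranks were strictly smaller.
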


Khoussainov and Nerode \cite{KhN95} show that, for each $n$, there is an automatic presentation of the ordinal $\omega^n$.  It is clear that such a presentation has ordinal height $\omega^n$. The next theorem proves that $\omega^\omega$ is the sharp bound on ranks of all automatic well-founded partial orders.  Once Corollary \ref{cr:PartitionRank} has been established, the proof of Theorem \ref{thm:OrderRank} follows Delhomm\'e \cite{Del04} and Rubin \cite{Rub04}.\\

\noindent{\bf Theorem \ref{thm:OrderRank}.~}{\em
For each ordinal $\alpha$, $\alpha$ is the ordinal height of an automatic well-founded partial order if and only if $\alpha< \omega^\omega$.}

\begin{proof}
One direction of the proof is clear. For the other, assume for a contradiction that there is an automatic well-founded partial order $\A = (A, \leq)$ with $r(\A) = \alpha\geq\omega^\omega$.  Let $(S_{A}, \iota_{A}, \Delta_{A}, F_{A})$ and $(S_{\leq}, \iota_{\leq}, \Delta_{\leq}, F_{\leq})$ be finite automata over $\Sigma$ recognizing $A$ and $\leq$ (respectively).  
 By Lemma \ref{lm:witnessRank}, for each $n>0$ there is
$u_n\in A$ such that $r_{\A}(u_n)=\omega^n$.   For each $u \in A$ we define the set
\[
u \downarrow = \{ x \in A : x < u \}.
\]
Note that if $r_{\A}(u)$ is a limit ordinal then $r_{\A}(u) = r(u\downarrow)$. We define a finite partition of $u \downarrow$ in order to apply Corollary \ref{cr:PartitionRank}.  To do so, 
for $u, v \in \Sigma^{\star}$, define
$X_{v}^{u} = \{ vw \in A : w \in \Sigma^{\star} \ \& \ vw < u \}$.  
Each set of the form $u \downarrow$ can then be partitioned based on the prefixes of words
as follows:
\[
u \downarrow = \{ x \in A : |x| < |u | \ \& \ x < u \} \cup \bigcup_{v \in \Sigma^{\star} : |v| = |u|} X_{v}^{u}.
\]
(All the unions above are finite and disjoint.)  Hence, applying Corollary \ref{cr:PartitionRank}, for each $u_n$ there exists  a $v_n$ such that $|u_n|=|v_n|$ and $r(X_{v_n}^{u_n})=r(u_n \downarrow)=\omega^n$.

On the other hand, we use the automata to define the following equivalence relation on pairs of words of equal lengths:
\begin{align*}
(u,v) \sim (u', v') \ \iff \ &\Delta_{A}(\iota_{A}, v) = \Delta_{A}(\iota_{A}, v') \ \& \\ &\Delta_{\leq}(\iota_{\leq}, \binom{v}{u}) = \Delta_{\leq}(\iota_{\leq}, \binom{v'}{u'})
\end{align*}
There are at most  $|S_{A}|\times |S_{\leq}|$ equivalence classes. Thus, the infinite sequence $(u_1, v_1)$, $(u_2, v_2)$, $\ldots$ contains $m$, $n$ such that $m \neq n$ and $(u_{m}, v_{m}) \sim (u_{n}, v_{n})$.  

\begin{lemma}\label{lm:IsoXvu}
For any $u,v,u',v' \in \Sigma^{\star}$, if $(u,v) \sim (u', v')$ then $r(X_{v}^{u}) = r(X_{v'}^{u'})$.
\end{lemma}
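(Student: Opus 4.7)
The plan is to prove the stronger assertion that the map $\phi \colon X_v^u \to X_{v'}^{u'}$ given by $\phi(vw) = v'w$ is an isomorphism of partial orders; rank equality then follows at once since isomorphic well-founded posets have the same ordinal height. The verification breaks into (a) showing $\phi$ is a well-defined bijection, and (b) showing $\phi$ preserves the order $\leq$.

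For (a), the first clause $\Delta_A(\iota_A, v) = \Delta_A(\iota_A, v')$ of $\sim$ ensures that the $A$-automaton, running from this common state on the suffix $w$, accepts $vw$ iff it accepts $v'w$. For the condition $vw < u$, note that since $|v|=|u|$ and $|v'|=|u'|$ the convolution of $(vw,u)$ decomposes as $\binom{v}{u}$ followed by the symbols $\binom{w_i}{\diamond}$, and the second clause of $\sim$ says the $\leq$-automaton reaches identical states after reading $\binom{v}{u}$ and $\binom{v'}{u'}$. Acceptance of the full convolutions therefore coincides, and $vw < u$ iff $v'w < u'$. So $\phi$, with inverse $v'w \mapsto vw$, is a well-defined bijection.

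For (b) — the main obstacle — one wants $vw_1 \leq vw_2$ iff $v'w_1 \leq v'w_2$ for all $vw_1, vw_2 \in X_v^u$. The convolution of $(vw_1, vw_2)$ now begins with $\binom{v}{v}$ and then reads the (suitably $\diamond$-padded) convolution of $w_1$ with $w_2$, so acceptance depends on the state $s(v) := \Delta_\leq(\iota_\leq, \binom{v}{v})$ rather than $\Delta_\leq(\iota_\leq, \binom{v}{u})$. Since $\sim$ as written does not literally track $s(v)$, I would refine the equivalence by the additional clause $s(v) = s(v')$. The refined relation still has at most $|S_A|\cdot|S_\leq|^2$ classes, so the pigeonhole step in the surrounding proof of Theorem \ref{thm:OrderRank} still produces $m \neq n$ with $(u_m,v_m)$ and $(u_n,v_n)$ in the same refined class. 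Under this strengthened hypothesis, the prefix computations of the two convolutions end in the same state and the suffix symbols are identical, so the $\leq$-automaton accepts one iff it accepts the other; hence $\phi$ is an order-isomorphism, $X_v^u \cong X_{v'}^{u'}$, and $r(X_v^u) = r(X_{v'}^{u'})$ as desired.
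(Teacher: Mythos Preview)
Your approach is exactly the paper's: define $g(vw)=v'w$ and argue it is an order isomorphism, whence the ranks agree. The paper simply asserts that $g$ is ``order preserving'' without further comment; you have correctly noticed that $\sim$ as literally defined only tracks $\Delta_\leq\bigl(\iota_\leq,\binom{v}{u}\bigr)$ and not the state $\Delta_\leq\bigl(\iota_\leq,\binom{v}{v}\bigr)$ that governs whether $vw_1\leq vw_2$. Your refinement---adding the clause $\Delta_\leq\bigl(\iota_\leq,\binom{v}{v}\bigr)=\Delta_\leq\bigl(\iota_\leq,\binom{v'}{v'}\bigr)$, which multiplies the number of classes by at most $|S_\leq|$ and so preserves the pigeonhole step---is the standard way to make the Delhomm\'e argument precise. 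So your proof is both the paper's intended argument and a repair of a small omission in its presentation.
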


To prove the lemma, consider $g: X_{v}^{u} \to X_{v'}^{u'}$ defined as $g(vw) = v'w$.  From the equivalence relation, we see that $g$ is well-defined, bijective, and order preserving.  Hence $X_v^u \cong X_{v'}^{u'}$ (as partial orders).  Therefore, $r(X_{v}^{u}) = r(X_{v'}^{u'})$.

By Lemma \ref{lm:IsoXvu}, $\omega^{m} = r(X_{v_{m}}^{u_{m}}) = r(X_{v_{n}}^{u_{n}}) = \omega^{n}$, a contradiction with the assumption that $m \neq n$.  Therefore, there is no automatic well-founded partial order of ordinal height greater than or equal to $\omega^{\omega}$. 
\end{proof}

\section{Ranks of automatic well-founded relations}\label{s:ranksWF}

\subsection{Configuration spaces of Turing machines}\label{s:Config}
In the forthcoming constructions, we embed computable structures into 
automatic ones via configuration spaces of  Turing machines.
This subsection provides terminology and background for these constructions. 
Let $\M$ be an $n$-tape deterministic Turing machine.  
The {\bf configuration space}  of $\M$, denoted by $Conf(\M)$, is a directed graph whose nodes are configurations of $\M$. The nodes are $n$-tuples, each of whose coordinates 
represents the contents of a tape.  Each tape is encoded as $(w ~q ~ w')$, 
where $w, w' \in \Sigma^{\star}$ are the symbols on the tape before and 
after the location of the read/write head, and $q$ is one of the  states 
of $\M$.  The edges of the graph are all the pairs of the form $(c_1,c_2)$ such that 
there is an instruction of $\M$ that  transforms  
$c_{1}$ to $c_{2}$.  The configuration space is an automatic graph.  The out-degree of every vertex in $Conf(\M)$ is $1$; the in-degree need not be $1$. 

\begin{definition}
A deterministic Turing machine $\M$ is {\bf reversible} if $Conf(\M)$ consists only of finite chains and chains of type $\omega$.
\end{definition}

\begin{lemma} \cite{Ben73} \label{lm:reverse}
For any deterministic $1$-tape Turing machine there is a reversible $3$-tape Turing machine which accepts the same language.
\end{lemma}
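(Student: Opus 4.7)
The plan is to follow Bennett's classical construction from \cite{Ben73}. Given a deterministic $1$-tape machine $M$, build a $3$-tape machine $M'$ whose three tapes play distinct roles: tape~1 simulates $M$'s work tape, tape~2 records a step-by-step history of the simulation, and tape~3 is used to preserve the result. The computation of $M'$ on an input $w$ will consist of three consecutive phases, each of which is individually reversible, and whose interfaces are coded carefully so that no predecessor-ambiguity is introduced at the seams.

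In the \emph{forward phase}, $M'$ simulates $M$ on tape~1 step by step, and for each transition of $M$ it applies, it appends a token to the right end of tape~2 recording which instruction of $M$ was used and which symbol was overwritten (so the overwrite is invertible even when $M$'s transition is not). When $M$ halts, $M'$ enters a \emph{copy phase}, in which the contents of tape~1 are transcribed onto tape~3; a distinguished marker in the state (and on tape~2) delimits this phase from the next. Finally, in the \emph{uncompute phase}, $M'$ reads the history on tape~2 from right to left, for each token reversing the corresponding step of $M$ on tape~1 and erasing that token from tape~2. At the end, tape~1 and tape~2 are blank, tape~3 holds the input or the output (from which acceptance can be read off), and the language accepted by $M'$ is exactly that of $M$.

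To verify that $M'$ is reversible, one must show that $Conf(M')$ has in-degree at most $1$ and contains no cycles. Within the forward phase, the rightmost history token on tape~2 together with the current state of $M'$ uniquely determines the previous configuration, because the token stores enough information to undo both the state transition of $M$ and the cell overwrite. Within the uncompute phase, a symmetric argument applies: the tape~1 content plus the next history token to be erased determines the previous configuration uniquely. Acyclicity of each chain follows from the monotonicity of tape~2 — its length strictly increases in the forward phase, is constant in copy, and strictly decreases in uncompute — so no configuration can repeat. The out-degree is $1$ by determinism, so the chains of $Conf(M')$ are either finite (if the simulation terminates) or of order type $\omega$ (if $M$ diverges on $w$).

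The main obstacle I expect is the bookkeeping at the phase boundaries. Without care, two distinct configurations in different phases could map to the same successor (for example, the first step of the copy phase could be reachable from both the last step of forward simulation and some other copy configuration), destroying in-degree~$1$. The remedy is to introduce disjoint sets of states for each phase and to use explicit sentinel symbols on tape~2 that mark the halt position of $M$ and the endpoint of the copy process; these markers make the phase of every configuration readable from the configuration itself, and each phase transition is coded as a unique state change with a unique tape configuration, so the predecessor is always unambiguous. Once this coding is fixed, the chain structure of $Conf(M')$ follows as above, proving reversibility.
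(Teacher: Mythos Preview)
Your proposal is correct and follows essentially the same approach as the paper's proof sketch: both implement Bennett's three-phase construction (simulate while recording a history on tape~2, copy the result to tape~3, then uncompute by reversing the history). One minor slip: after the uncompute phase tape~1 holds the restored \emph{input}, not blank (as the paper's sketch notes), but this does not affect your reversibility argument.
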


\begin{proof}(Sketch)
Given a deterministic Turing machine, define a $3$-tape Turing machine
with a modified set of instructions. 
The modified instructions have the property that neither the domains nor the ranges overlap.  The first tape performs the computation exactly as the original machine would have done.  As the new machine executes each instruction, it stores the index of the instruction on the second tape, forming a history.  Once the machine enters a state which would have been halting for the original machine, the output of the computation is copied onto the third tape.  Then, the machine runs the computation backwards  and erases the history tape.  The halting configuration contains the input on the first tape, blanks on the second tape, and the output on the third tape.
\end{proof}

We establish the following notation for a $3$-tape reversible Turing machine $\M$ given by the construction in this lemma.  A {\bf valid initial configuration} of $\M$ is of the form $(\lambda~ \iota ~ x , \lambda, \lambda )$, where $x$ in the domain,  $\lambda$ is the empty string,  and $\iota$ is the initial state of $\M$. From the  proof of Lemma \ref{lm:reverse}, observe that a {\bf final (halting)} configuration is of the form $(x, \lambda, \lambda ~q_{f} ~y)$, 
with $q_{f}$ a halting state of $\M$.   Also, because of the reversibility assumption, all the chains in $Conf(\M)$ 
are either finite or $\omega$-chains (the order type of the natural numbers).  In particular, this means that $Conf(\M)$ is well-founded.  We call an element of in-degree $0$ a {\bf base} (of a chain).  The set of valid initial or final configurations is regular. We classify the components (chains) of $Conf(\M)$ as follows:
\begin{itemize}
\item {\bf Terminating computation chains}: finite chains whose base is a valid initial configuration; that is, one of the form $(\lambda ~\iota~ x, \lambda, \lambda )$, for $x \in \Sigma^{\star}$.
\item {\bf Non-terminating computation chains}: infinite chains whose base is a valid initial configuration. 
\item {\bf Unproductive chains}: chains whose base is not a valid initial configuration.
\end{itemize} 

Configuration spaces of reversible Turing machines are locally finite graphs (graphs of finite degree) and well-founded.  Hence, the following proposition guarantees that their ordinal heights are small.

\begin{proposition} \label{ppn:ConfigWF} 
If $G = (A,E)$ is a locally finite graph then $E$ is well-founded and the ordinal height of $E$ is not above $\omega$, or $E$ has an infinite chain.
\end{proposition}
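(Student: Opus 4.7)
The plan is to reduce the claim to the implication: if $E$ is well-founded, then the ordinal height $r(G)$ is at most $\omega$. The second disjunct of the statement handles exactly the case when $E$ fails to be well-founded, since the definition of well-foundedness directly provides an infinite descending chain $x_1, x_2, x_3, \ldots$ with $(x_{i+1}, x_i) \in E$. So from here on I may assume $E$ is well-founded and focus on bounding $r(G)$.

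I will argue by contradiction. Suppose $r(G) > \omega$. I invoke Lemma \ref{lm:witnessRank} (whose statement only requires that $R$ be well-founded, not that it be a partial order, so it applies here) with $\beta = \omega < r(G)$ to obtain an element $x \in A$ with $r_G(x) = \omega$. Now I unfold the inductive definition
\[
r_G(x) \;=\; \sup\{\, r_G(y) + 1 : (y,x) \in E \,\}.
\]
Local finiteness of $G$ forces the predecessor set $P(x) = \{y : (y,x) \in E\}$ to be finite. Moreover, since $r_G$ is a ranking function, every $y \in P(x)$ satisfies $r_G(y) < r_G(x) = \omega$, so each such $r_G(y)$ is a natural number. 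The displayed supremum is therefore the maximum of a finite set of natural numbers (each shifted by $1$), which is itself a natural number and hence strictly less than $\omega$. This contradicts $r_G(x) = \omega$, so the assumption $r(G) > \omega$ fails, giving $r(G) \leq \omega$.

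There is no real obstacle in the argument, but the one point worth checking carefully is the applicability of Lemma \ref{lm:witnessRank}: it is stated in the context where $R$ is a general well-founded binary relation, so no partial-order hypothesis is smuggled in, and the step from $r(G) > \omega$ to the existence of a witness $x$ with $r_G(x) = \omega$ is legitimate. Everything else is a direct computation with the finite predecessor set supplied by local finiteness.
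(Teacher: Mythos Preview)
Your proof is correct and follows essentially the same route as the paper's: assume well-foundedness, suppose for contradiction that $r(G)>\omega$, produce a vertex of rank exactly $\omega$, and derive a contradiction from the finiteness of its predecessor set. Your version is in fact slightly more careful than the paper's, which omits the explicit appeal to Lemma~\ref{lm:witnessRank} and writes the rank formula without the ``$+1$''; your check that Lemma~\ref{lm:witnessRank} is stated for arbitrary well-founded relations (before the partial-order assumption is imposed) is well taken.
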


\begin{proof}
Suppose $G$ is a locally finite graph and $E$ is well-founded.  For a contradiction, suppose $r(G) > \omega$.  Then there is $v \in A$ with $r(v) = \omega$. By definition, $r(v) = \sup\{ r(u) : u E v \}$.  But, this implies that there are infinitely many elements $E$-below $v$, a contradiction with local finiteness of $G$.
\end{proof}

\subsection{Automatic well-founded relations of high rank}

We are now ready to prove that $\omega_1^{CK}$ is the sharp bound for ordinal heights of automatic well-founded relations.

\vspace{5pt}

\noindent{\bf Theorem \ref{thm:HeightRank}.~}{\em
For each computable ordinal $\alpha < \omega_{1}^{CK}$, there is an automatic well-founded relation $\A$ with ordinal height greater than $\alpha$}


\begin{proof} 
The proof of the theorem uses properties of Turing machines and their configuration spaces.  We take a computable well-founded relation whose ordinal height is $\alpha$, and ``embed" it into an automatic well-founded relation with similar ordinal height.

By Lemma \ref{lm:compRank}, let $\C=(C, L_\alpha)$ be a computable well-founded relation of ordinal height $\alpha$.  
We assume without loss of generality that $C = \Sigma^{\star}$ for some finite alphabet $\Sigma$. Let $\M$ be the Turing machine computing the relation $L_{\alpha}$.  On each pair $(x,y)$ from the domain, $\M$ halts and outputs \lqt yes\rqt or \lqt no\rqt. By Lemma \ref{lm:reverse}, we can assume that $\M$ is reversible.  Recall that $Conf(\M) = (D, E)$ is an automatic graph. 
We define the domain of our automatic structure to be $A = \Sigma^{\star} \cup D$.  The binary relation of the automatic structure is:
\begin{align*}
R = E ~\cup~ &\{ (x, (\lambda ~ \iota ~ (x, y), \lambda, \lambda) ) : x,y \in \Sigma^{\star}\}  ~\cup \\
&\{ (( (x,y), \lambda, \lambda~q_{f}~\text{\lqt yes\rqt}), y) : x,y \in \Sigma^{\star}\}.
\end{align*}
Intuitively, the structure $(A; R)$ is a stretched out version of $(C, L_\alpha)$ with infinitely many finite pieces extending from elements of $C$, and with disjoint pieces which are either finite chains or chains of type $\omega$.  The structure $(A; R)$ is automatic because its domain is a regular set of words
and the relation $R$ is recognisable by a $2$-tape automaton.  We should verify, however, that $R$ is well-founded.  Let $Y \subset A$.  If $Y \cap C \neq \emptyset$ then since $(C, L_{\alpha})$ is well-founded, there is $x \in Y \cap C$ which is $L_{\alpha}$-minimal. The only possible elements $u$ in $Y$ for which $(u,x) \in R$ are those which lie on computation chains connecting some $z \in C$ with $x$.  Since each such computation chain is finite, there is an $R$-minimal $u$ below $x$ on each chain.  Any such $u$ is $R$-minimal for $Y$.  On the other hand, if $Y \cap C = \emptyset$, then $Y$ consists of disjoint finite chains and chains of type $\omega$. Any such chain has a minimal element, and any of these elements are $R$-minimal for $Y$. 
Therefore, $(A; R)$ is an automatic well-founded structure.

We now consider the ordinal height of $(A; R)$.  
For each element $x \in C$, an easy induction on $r_{C}(x)$, shows that 
$r_{\C} (x) \leq r_{\A}(x) \leq \omega+r_{\C} (x)$. 
We denote by $\ell(a,b)$ the (finite) length of the computation chain of $\M$ with input $(a,b)$.  For any element $a_{x,y}$ in the computation chain which represents the computation of $\M$ determining whether $(x,y) \in R$, we have \ 
$r_{\A}(x) \leq r_{\A}(a_{x,y}) \leq r_{\A}(x) + \ell(x,y)$. \ 
For any element $u$ in an unproductive chain of the configuration space,  $0\leq r_{\A}(u)<\omega$.  Therefore, since $C \subset A$,  \ 
$r(\C) \leq r(\A) \leq \omega + r(C)$.
\end{proof}

\section{Automatic Structures and Scott Rank}\label{s:SR}
  
 The Scott rank of a structure is introduced in the proof of Scott's Isomorphism Theorem \cite{Sco65}. Since then, variants of the Scott rank have been used in the computable model theory literature. Here we follow the definition of Scott rank from \cite{CGKn05}.

\begin{definition}
For structure $\A$ and tuples $\bar{a}, \bar{b} \in A^{n}$ (of equal length), define
\begin{itemize}
\item $\bar{a} \equiv^{0} \bar{b}$ if $\bar{a}, \bar{b}$ satisfy the same quantifier-free formulas in the language of $\A$; 
\item For $\alpha > 0$, $\bar{a} \equiv^{\alpha} \bar{b}$ if for all $\beta < \alpha$, for each $\bar{c}$ (of arbitrary length) there is $\bar{d}$ such that 
$\bar{a}, \bar{c} \equiv^{\beta} \bar{b}, \bar{d}$; and for each $\bar{d}$ (of arbitrary length) there is $\bar{c}$  such that 
$\bar{a}, \bar{c} \equiv^{\beta} \bar{b}, \bar{d}$.
\end{itemize}
Then, the {\bf Scott rank} of the tuple $\bar{a}$, denoted by $\SR(\bar{a})$, is  the least 
$\beta$ such that for all $\bar{b} \in A^{n}$, $\bar{a}\equiv^{\beta} \bar{b}$ implies that $(\A, \bar{a}) \cong (\A, \bar{b})$.  Finally, the Scott rank of $\A$, denoted by $\SR(\A)$, is the
least $\alpha$ greater than the Scott ranks of all tuples of $\A$.
\end{definition}

\begin{example} $\SR(\Q, \leq) = 1$, $\SR(\omega, \leq) = 2$, and $\SR( n \cdot \omega, \leq) = n+1$.
\end{example}

Configuration spaces of reversible Turing machines are locally finite graphs.  By the proposition below, they all have low Scott Rank.
\begin{proposition}\label{ppn:ConfigScott}
If $G = (V,E)$ is a locally finite graph, $SR(G) \leq 3$.
\end{proposition}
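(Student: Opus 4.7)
The plan is to show that every tuple $\bar a$ from $G$ satisfies $\SR(\bar a) \leq 2$, which immediately yields $\SR(G) \leq 3$. Fix $\bar a$ and assume $\bar a \equiv^2 \bar b$; the goal is to construct a graph automorphism of $G$ sending $\bar a$ to $\bar b$. Write $B_n(\bar a)$ for the set of vertices within graph distance $n$ of some entry of $\bar a$, and $C(\bar a) = \bigcup_{n} B_n(\bar a)$ for the union of connected components meeting $\bar a$; by local finiteness each $B_n(\bar a)$ is finite. The first key observation is that $\bar a \equiv^1 \bar b$ already forces an induced-subgraph isomorphism $B_n(\bar a) \cong B_n(\bar b)$ sending $\bar a$ to $\bar b$: letting $\bar c$ enumerate $B_n(\bar a) \setminus \bar a$, the $\equiv^1$ condition yields $\bar d$ with $\bar a \bar c \equiv^0 \bar b \bar d$, so the correspondence $a_i \mapsto b_i$, $c_j \mapsto d_j$ respects edges and non-edges. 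Because quantifier-free agreement preserves edges it preserves paths of length at most $n$ back to $\bar a$, so the vertices of $\bar b \bar d$ all lie in $B_n(\bar b)$. Swapping the roles of $\bar a, \bar b$ and using finiteness of the balls upgrades both induced embeddings to bijections.

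To promote these ball isomorphisms to a single isomorphism of $C(\bar a)$, I would build the tree $T$ whose level-$n$ nodes are the isomorphisms $B_n(\bar a) \to B_n(\bar b)$ sending $\bar a$ to $\bar b$, with parenthood given by restriction (which is a valid isomorphism of the smaller ball because any such map preserves distances to $\bar a$). Each level is nonempty by the first step and finite because each $B_n$ is, so K\"onig's lemma produces an infinite branch whose union is an isomorphism $\Phi : C(\bar a) \to C(\bar b)$ with $\Phi(\bar a) = \bar b$. Next I would invoke the stronger hypothesis $\bar a \equiv^2 \bar b$ to match the components of $G$ disjoint from $C(\bar a)$. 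For any such component $C'$ and any $v \in C'$, apply $\equiv^2$ with $\bar c = v$ to obtain $w$ with $\bar a v \equiv^1 \bar b w$; rerunning the preceding argument for the longer tuples gives $C(\bar a v) \cong C(\bar b w)$, and restricting this isomorphism to the component of $v$ yields $C' \cong C''$ for the component $C''$ of $w$, which must be disjoint from $C(\bar b)$ because connectivity is preserved. To track multiplicities I would let $\bar c$ enumerate one vertex from each of $k$ distinct copies of $C'$ in $G \setminus C(\bar a)$; the same argument then produces $k$ distinct copies of $C'$ in $G \setminus C(\bar b)$. The symmetric version of this argument shows that the multisets of isomorphism types of residual components on the two sides agree (whether the multiplicities are finite or countably infinite).

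Assembly then consists of combining $\Phi$ with a bijection between the residual components of $G \setminus C(\bar a)$ and those of $G \setminus C(\bar b)$ that pairs matching isomorphism types, together with a concrete isomorphism on each paired couple, producing an isomorphism $G \to G$ sending $\bar a$ to $\bar b$ and hence $\SR(\bar a) \leq 2$. The main obstacle is the K\"onig's lemma passage from ball isomorphisms to a coherent component isomorphism: the ball isomorphisms produced in the first step need not be restrictions of one another, so the content of the argument is in organizing them into the finitely branching tree of partial isomorphisms fixing $\bar a \mapsto \bar b$ and extracting a limit branch. The subsequent multiplicity bookkeeping is conceptually simple but must handle the possibility of countably many isomorphic residual components.
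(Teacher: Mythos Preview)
Your proposal is correct and follows essentially the same route as the paper: establish ball isomorphisms from the back-and-forth equivalence, assemble them via K\"onig's lemma on the tree of partial isomorphisms into a component isomorphism, and then extend to a full automorphism of $G$. Your treatment is in fact more careful than the paper's, which invokes $\equiv^2$ (rather than $\equiv^1$) already at the ball stage and then leaves the passage from component isomorphism to global automorphism implicit; you correctly pinpoint the residual-component matching as the place where the full strength of $\equiv^2$ is actually used.
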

\begin{proof}
The neighbourhood of diameter $n$ of a subset $U$, denoted $B_{n}(U)$, is defined as follows: $B_0(U) = U$ and $B_n(U)$ is the set of $v \in V$ which can be reached from $U$ by $n$ or fewer edges.  The proof of the proposition relies on two lemmas.

\begin{lemma}\label{lm:ConfigScott_1}
Let $\bar{a}, \bar{b} \in V$ be such that $\bar{a} \equiv^2 \bar{b}$.  Then for all $n$, there is a bijection of the $n$-neighbourhoods around $\bar{a}, \bar{b}$ which sends $\bar{a}$ to $\bar{b}$ and which respects $E$.
\end{lemma}
\begin{proof}
For a given $n$, let $\bar{c} = B_{n}(\bar{a})\setminus \bar{a}$. Note that $\bar{c}$ is a finite tuple because of the local finiteness condition.  Since $\bar{a} \equiv^2 \bar{b}$, there is $\bar{d}$ such that $\bar{a} \bar{c} \equiv^1 \bar{b} \bar{d}$.  If $B_{n}(\bar{b}) = \bar{b} \bar{d}$, we are done.  Two set inclusions are needed.  First, we show that $d_{i} \in B_{n}(\bar{b})$.  By definition, we have that $c_{i} \in B_{n}(\bar{a})$, and let $a_{j}, u_{1}, \ldots, u_{n-1}$ witness this.  Then since $\bar{a} \bar{c} \equiv^1 \bar{b} \bar{d}$, there are $v_{1}, \ldots, v_{n-1}$ such that $\bar{a}\bar{c} \bar{u}\equiv^0 \bar{b}\bar{d}\bar{v}$.  In particular, we have that if $c_{i} E u_{i} E \cdots E u_{n-1} E a_{j}$, then also $d_{i} E v_{i} E \cdots E v_{n-1} E b_{j}$ (and likewise if the $E$ relation is in the other direction).  Hence, $d_{i} \in B_{n}(\bar{b})$.  Conversely, suppose $v \in B_{n}(\bar{b}) \setminus \bar{d}$.  Let $v_{1}, \ldots, v_{n}$ be witnesses and this will let us find a new element of $B_{n}(\bar{a})$ which is not in $\bar{c}$, a contradiction.
\end{proof}

\begin{lemma}\label{lm:ConfigScott_2}
Let $G=(V,E)$ be a graph.  Suppose $\bar{a}, \bar{b} \in V$ are such that for all $n$, $(B_{n}(\bar{a}), E, \bar{a}) \cong (B_{n}(\bar{b}), E, \bar{b})$.  Then there is an isomorphism between the component of $G$ containing $\bar{a}$ and that containing $\bar{b}$ which sends $\bar{a}$ to $\bar{b}$.
\end{lemma}
\begin{proof}
We consider a tree of partial isomorphisms of $G$.  The nodes of the tree are bijections from $B_{n}(\bar{a})$ to $B_{n}(\bar{b})$ which respect the relation $E$ and map $\bar{a}$ to $\bar{b}$.  Node $f$ is the child of node $g$ in the tree if $\text{dom}(f) = B_{n}(\bar{a})$, $\text{dom}(g)  = B_{n+1}(\bar{a})$ and $f \supset g$.  Note that the root of this tree is the map which sends $\bar{a}$ to $\bar{b}$.  Moreover, the tree is finitely branching and is infinite by Lemma \ref{lm:ConfigScott_1}.  Therefore, K\"onig's Lemma gives an infinite path through this tree.  The union of all partial isomorphisms along this path is the required isomorphism.
\end{proof}

To prove the proposition, we note that for any $\bar{a}, \bar{b}$ in $V$ such that $\bar{a} \equiv^2 \bar{b}$, Lemmas \ref{lm:ConfigScott_1} and \ref{lm:ConfigScott_2} yield an isomorphism from the component of $\bar{a}$ to the component of $\bar{b}$ that maps $\bar{a}$ to $\bar{b}$.  Hence, if $\bar{a} \equiv^2 \bar{b}$, there is an automorphism of $G$ that maps $\bar{a}$ to $\bar{b}$.  Therefore, for each $\bar{a} \in V$, $SR(\bar{a}) \leq 2$, so $SR(G) \leq 3$. 
\end{proof}

Let $\C=(C; R_{1}, \ldots, R_{m})$ be a computable structure.  Recall that since $C$ is a computable set, we may assume it is $\Sigma^\star$ for some finite alphabet $\Sigma$.  We construct an 
automatic structure $\A$ whose Scott rank is (close to) the Scott rank of $\C$.  Since the domain of $\C$ is computable, we assume that $C=\Sigma^{\star}$ for some finite 
$\Sigma$.  The construction of $\A$ involves connecting the configuration spaces of Turing machines computing relations $R_{1}, \ldots, R_{m}$. Note that Proposition \ref{ppn:ConfigScott} suggests that the high Scott rank of the resulting automatic structure is the main part of the construction because it is not provided by the configuration spaces themselves.  The construction in some sense expands $\C$ into an automatic structure.  We comment that expansions do not  necessarily preserve the Scott rank.  For example, any computable structure, $\C$, has an expansion with Scott rank $2$.  The expansion is obtained by adding the successor relation into the signature.

 We detail the construction for  $R_{i}$.  Let $\M_{i}$ be a Turing machine for $R_{i}$. 
By a simple modification of the machine we assume that $\M_{i}$ halts if and only if its output is \lqt yes\rqt.  By Lemma \ref{lm:reverse}, we can also assume that $\M_{i}$ is reversible.  We now modify the configuration space $Conf(\M_{i})$ so as to respect the isomorphism type of $\C$. 
This will ensure that the construction (almost) preserves the Scott rank of $\C$. We use the terminology from Subsection \ref{s:Config}.

{\bf Smoothing out unproductive parts}. The length and number of unproductive chains is determined by the machine $\M_{i}$ and hence may differ even for Turing machines computing the same set.  In this stage, we standardize the format of this unproductive part of the configuration space.  We wish to add enough redundant information in the unproductive section of the structure so that if two given computable structures are isomorphic, the unproductive parts of the automatic representations will also be isomorphic .
We add $\omega$-many chains of length $n$ (for each $n$) and $\omega$-many copies of $\omega$.  This ensures that the (smoothed) unproductive section of the configuration space of any Turing machine will be isomorphic and preserves automaticity.   We comment that adding this redundancy preserves automaticity since the operation is a disjoint union of automatic structures.

{\bf Smoothing out lengths of computation chains}. We turn our attention to the chains  which have valid initial configurations at their base.  The length of each finite chain denotes the length of computation required to return a \lqt yes\rqt answer.  We will smooth out these chains by adding \lqt fans\rqt to each base.  For this, we connect to each base of a computation chain a structure which consists of $\omega$ many chains of each finite length.  To do so we follow Rubin \cite{Rub04}:  consider the structure whose domain is $0^{\star} 0 1^{\star}$ and whose relation is given by $x E y$ if and only if $|x| = |y|$ and $y$ is the least lexicographic successor of $x$. This structure has a finite chain of every finite length.  As in Lemma \ref{lm:omega-fold}, we take the $\omega$-fold disjoint union of the structure and identify the bases of all the finite chains.  We get a \lqt fan\rqt with infinitely many chains of each finite size whose base can be identified with a valid initial computation state.  Also, the fan has an infinite component if and only if $R_{i}$ does not hold of the input tuple corresponding to the base. The result is an automatic graph, $Smooth(R_{i}) = ( D_{i}, E_{i})$, which extends $Conf(\M_{i})$.

{\bf Connecting domain symbols to the computations of the relation}. 
We apply the construction above to each  $R_{i}$ in the signature of $\C$. 
Taking the union of the resulting automatic graphs and adding vertices for the domain, we have the structure $(\Sigma^{\star} \cup \cup_i D_{i}, E_{1}, \ldots, E_{n})$ (where we assume that the $D_i$ are disjoint).
We assume without loss of generality that each $\M_{i}$ has a different initial state, and denote it by $\iota_{i}$.   We add $n$ predicates $F_i$ to the signature of the automatic structure connecting the elements of the domain of $\C$ with the computations of the relations $R_{i}$:
$$F_i =  \{ (x_{0}, \ldots, x_{m_{i}-1}, (\lambda~\iota_{i}~(x_{0},\ldots, x_{m_{i}-1}), \lambda, \lambda)) \mid  x_{0}, \ldots, x_{m_{i}-1} \in \Sigma^{\star} \}.$$
Note that for $\bar{x} \in \Sigma^{\star}$,  $R_{i}(\bar{x})$ if and only if 
$F_{i}  (\bar{x}, (\lambda ~\iota_{i}~\bar{x}, \lambda, \lambda))$ holds and all $E_{i}$ chains emanating from $(\lambda~\iota_{i}~\bar{x}, \lambda, \lambda)$ are finite. 
We have built  the automatic structure $$\A= (\Sigma^{\star} \cup \cup_i D_{i},  E_{1}, \ldots, E_{n}, F_{1}, \ldots, F_{n}).$$ Two technical lemmas are used to show that the Scott rank of $\A$ is close to $\alpha$:

\begin{lemma}\label{lm:EquivTransfer}
For $\bar{x}, \bar{y}$ in the domain of $\C$ and for ordinal $\alpha$, if $\bar{x} \equiv_{\C}^{\alpha} \bar{y}$ then $\bar{x} \equiv_{\A}^{\alpha} \bar{y}$.
\end{lemma}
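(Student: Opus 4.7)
The plan is to prove the lemma by transfinite induction on $\alpha$, strengthened to cover mixed tuples. I will define for each $\alpha$ a relation $\sim_\alpha$ on equal-length tuples of $A$ such that (i) $(\bar{x},\bar{y}) \in \sim_\alpha$ whenever $\bar{x},\bar{y} \in \Sigma^\star$ and $\bar{x} \equiv^\alpha_\C \bar{y}$, and (ii) $(\bar{u},\bar{v}) \in \sim_\alpha$ implies $\bar{u} \equiv^\alpha_\A \bar{v}$. The relation $\sim_\alpha$ allows a tuple to mix domain elements from $\Sigma^\star$ with elements from $\bigcup_i D_i$, demanding that the configuration-space entries be matched according to the canonical isomorphisms between the ``attached structures'' (the smoothed computation chain together with its fan) rooted at initial configurations, or within the standardized unproductive part. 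Because the smoothing constructions make the unproductive section fixed across the whole graph, and make the attached structure at an initial configuration of a tuple $\bar{z}$ depend only on the truth values of $R_i(\bar{z})$, such canonical isomorphisms always exist as soon as the relevant $R_i$-types of the matched domain tuples agree.

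The base case $\alpha = 0$ reduces to checking atomic types. Among tuples in $\Sigma^\star$ all $E_i$ and $F_i$ literals are false, so $\equiv^0_\C$, which preserves the equality pattern, implies $\equiv^0_\A$. For mixed tuples, the atomic $\A$-type of each configuration element is pinned down by its position in a canonically matched component, and each $F_i$-edge back to a matched domain sub-tuple is preserved because the matched sub-tuples satisfy the same $R_i$ relations.

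For the inductive step, assume $(\bar{u},\bar{v}) \in \sim_\alpha$ and fix $\beta < \alpha$ together with a challenger $c \in A$. Split cases. If $c \in \Sigma^\star$, extend the domain part of the decomposition by $c$ and invoke the back-and-forth witnessing $\bar{u}_1 \equiv^\alpha_\C \bar{v}_1$ to produce a matching element $d \in \Sigma^\star$; the new pair lies in $\sim_\beta$. If $c \in \bigcup_i D_i$ lies in the standardized unproductive part, take $d$ to be any element in the same orbit there. Otherwise $c$ lies in the attached structure of the initial configuration of some tuple $\bar{z}$: if $\bar{z}$ is already in the matched domain part, send $c$ to its image under the canonical isomorphism to the attached structure of the counterpart of $\bar{z}$; if $\bar{z}$ is new, first enlarge the $\C$-side matching by applying the $\C$-back-and-forth to pull a counterpart $\bar{w}$ of $\bar{z}$ into $\bar{v}$, and then proceed as before. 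The symmetric direction is identical, so $(\bar{u} c,\bar{v} d)$ witnesses $\sim_\beta$ and hence $\bar{u} c \equiv^\beta_\A \bar{v} d$ by induction. The main obstacle is this last bookkeeping step: one must ensure the enlarged pair still satisfies the invariants of $\sim_\beta$, which relies on $\equiv^\alpha_\C$ being strong enough to absorb a single additional $\C$-enrichment and drop to $\equiv^\beta_\C$, and on choosing the canonical isomorphisms of attached structures coherently across all matched tuples. Taking $\bar{u}_1 = \bar{x}$, $\bar{v}_1 = \bar{y}$ with empty configuration parts then yields the lemma.
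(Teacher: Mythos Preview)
Your approach is essentially the same as the paper's. The paper also strengthens the statement to handle mixed tuples $\bar{x}\bar{x}'$ with $\bar{x}\in\Sigma^\star$ and $\bar{x}'\in X=\dom(\A)\setminus\Sigma^\star$, imposes exactly the matching conditions you describe (an $E_i$-isomorphism between the configuration parts together with agreement on which entries are initial configurations of which domain subtuples), and proves the strengthened claim by transfinite induction on $\alpha$, invoking the smoothing steps to produce the required $\bar{v}'$ in the inductive step. Your relation $\sim_\alpha$ is a repackaging of the paper's three hypotheses, and your more explicit case split (domain element, unproductive element, element of an attached fan) spells out what the paper compresses into ``By the construction (in particular, the smoothing steps), we can find a corresponding $\bar{v}'$.''
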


\begin{proof}
Let $X = \dom{\A} \setminus \Sigma^{\star}$.  We prove the stronger result that for any ordinal $\alpha$, and for all $\bar{x}, \bar{y} \in \Sigma^{\star}$ and $\bar{x}', \bar{y}' \in X$, if the following assumptions hold
\begin{enumerate}
\item \label{asmp:C}$\bar{x} \equiv_{\C}^{\alpha} \bar{y}$;
\item \label{asmp:I}$\la \bar{x}', E_{i} : i =1 \ldots n\ra_{\A} \cong_{f}   \la \bar{y}', E_{i}, : i =1 \ldots n\ra_{\A}$ (hence the substructures in $A$ are isomorphic) with $f(\bar{x}') = \bar{y}'$; and
\item \label{asmp:E}for each $x'_{k} \in \bar{x}'$, each $i=1, \ldots, n$ and each subsequence of indices of length $m_{i}$, 
\[
x'_{k} = (\lambda ~\iota_{i}~\bar{x}_{j}, \lambda, \lambda) ~~ \iff ~~ y'_{k} = (\lambda ~\iota_{i}~\bar{y}_{j}, \lambda, \lambda)
\]
\end{enumerate} 
then $\bar{x} \bar{x}' \equiv_{\A}^{\alpha} \bar{y} \bar{y}'$.  The lemma follows if we take $\bar{x}' = \bar{y}' = \lambda$ (the empty string).

We show the stronger result by induction on $\alpha$.  If $\alpha = 0$, we need to show that for each $i,k, k', k_{0}, \ldots, k_{m_{i}-1}$, 
\[
E_{i}(x'_{k}, x'_{k'}) \iff E_{i}(y'_{k}, y'_{k'}), 
\]
and that 
\[
F_{i}(x_{k_{0}}, \ldots, x_{k_{m_{i}-1}}, x'_{k'}) \iff F_{i}(y_{k_{0}}, \ldots, y_{k_{m_{i}-1}}, y'_{k'}).  
\]
The first statement follows by assumption \ref{asmp:I}, since the isomorphism must preserve the $E_{i}$ relations and maps $\bar{x}'$ to $\bar{y}'$.  The second statement follows by assumption \ref{asmp:E}.  

Assume now that $\alpha >0$ and that the result holds for all $\beta < \alpha$.  Let $\bar{x}, \bar{y} \in \Sigma^{\star}$ and $\bar{x}', \bar{y}' \in A$ be such that the assumptions of the lemma hold.  We will show that $\bar{x} \bar{x}' \equiv_{\A}^{\alpha} \bar{y} \bar{y}'$.  Let $\beta < \alpha$ and suppose $\bar{u} \in \Sigma^{\star}, \bar{u}' \in A$.  By assumption \ref{asmp:C}, there is $\bar{v} \in \Sigma^{\star}$ such that $\bar{x}\bar{u} \equiv_{\C}^{\beta} \bar{y} \bar{v}$.  By the construction (in particular, the smoothing steps), we can find a corresponding $\bar{v}' \in A$ such that assumptions \ref{asmp:I}, \ref{asmp:E} hold.  Applying the inductive hypothesis, we get that $\bar{x} \bar{u} \bar{x}'\bar{u}' \equiv_{\A}^{\beta} \bar{y} \bar{v}\bar{y}' \bar{v}'$.  Analogously, given $\bar{v}, \bar{v}'$ we can find the necessary $\bar{u}, \bar{u}'$.  Therefore, $\bar{x}\bar{x}' \equiv_{\A}^{\alpha}\bar{y} \bar{y}'$.
\end{proof}

\begin{lemma}\label{lm:CRankHigher}
If $\bar{x} \in \Sigma^{\star} \cup \cup_i D_i$, there is $\bar{y} \in \Sigma^{\star}$ with $\SR_{\A} (\bar{x} \bar{x}' \bar{u} ) \leq 2 + \SR_{\C} (\bar{y})$.
\end{lemma}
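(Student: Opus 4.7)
The plan is to give an explicit recipe producing a tuple $\bar{y} \in \Sigma^{\star}$ from the given tuple in $\A$, and then to reduce $\equiv^{2+\alpha}_{\A}$-equivalence on $\A$-tuples to $\equiv^{\alpha}_{\C}$-equivalence on the associated $\bar{y}$, where $\alpha = \SR_{\C}(\bar{y})$. Given a tuple from $\A$, split it into its $\Sigma^{\star}$-part and its $\cup_i D_i$-part. For each entry in the second part, inspect the chain or fan component of $\A$ it lies on; whenever that component is attached (directly via some $F_i$, or as the extension of a computation chain) to a valid initial configuration $(\lambda\,\iota_i\,\bar{z},\lambda,\lambda)$, adjoin the tuple $\bar{z}$ to an accumulator. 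Let $\bar{y}$ be the concatenation of the $\Sigma^{\star}$-part of the input with all such $\bar{z}$'s.

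To show $\SR_{\A}(\bar{x}\bar{x}'\bar{u}) \leq 2 + \SR_{\C}(\bar{y})$, set $\alpha = \SR_{\C}(\bar{y})$ and suppose $\bar{b}$ from $\A$ satisfies $\bar{x}\bar{x}'\bar{u} \equiv^{2+\alpha}_{\A} \bar{b}$. The first two levels of this equivalence already force each coordinate of $\bar{b}$ to match the sort of its counterpart: it lies in $\Sigma^{\star}$ iff the counterpart does, lies on a component of the same type (finite chain, $\omega$-chain, or smoothing fan with or without an infinite leaf), and sits at the same distance from that component's base. Using the $F_i$ predicates to locate the valid initial configurations and then their $\Sigma^{\star}$-arguments, one extracts from $\bar{b}$ a tuple $\bar{y}^{\ast} \in \Sigma^{\star}$ that plays the same role for $\bar{b}$ as $\bar{y}$ does for the input.

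Next, one shows $\bar{y} \equiv^{\alpha}_{\C} \bar{y}^{\ast}$ by induction along $\alpha$, running in the direction dual to Lemma \ref{lm:EquivTransfer}: given a $\C$-witness $\bar{v}$ for an extension of $\bar{y}$, the construction supplies a canonical companion tuple $\bar{v}'$ in the configuration and smoothing layer so that $\bar{y}\bar{v}\bar{y}^{\ast}\bar{v}'$ satisfies the hypotheses of Lemma \ref{lm:EquivTransfer}, and then the $\A$-equivalence supplies the required $\C$-witness on the other side. Since $\alpha = \SR_{\C}(\bar{y})$, there is an automorphism $\pi$ of $\C$ carrying $\bar{y}$ to $\bar{y}^{\ast}$. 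Lift $\pi$ to an automorphism $\hat{\pi}$ of $\A$ by acting as $\pi$ on $\Sigma^{\star}$, as the canonical isomorphism between the $Smooth(R_i)$-piece rooted at $\bar{z}$ and the one rooted at $\pi(\bar{z})$ on the computation layer (the smoothing construction is uniform in its input), and by any bijection preserving the disjoint-union structure on the unproductive and redundant components. By the way $\bar{y}$ and $\bar{y}^{\ast}$ were built, $\hat{\pi}$ carries the original tuple to $\bar{b}$, giving the desired isomorphism.

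The main obstacle is the descent step: an $\A$-witness to $\equiv^{\beta}$-equivalence may well live entirely in the configuration-space layer and involve elements with no $\C$-analogue, so one must argue that every such witness can be exchanged for one whose $\Sigma^{\star}$-projection is a genuine $\C$-witness. This is precisely what the smoothing step of the construction was engineered to allow: because the unproductive pieces and the redundant chains-and-fans look identical regardless of $\C$, any $\A$-side witness can be shuffled without changing its $\equiv^{\beta}_{\A}$-type until the $\C$-projection is exactly what the induction demands. The formal argument is a careful symmetric counterpart to Lemma \ref{lm:EquivTransfer}, and once it is in hand, the rest of the proof assembles as above.
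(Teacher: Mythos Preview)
Your overall strategy coincides with the paper's: anchor the given $\A$-tuple to a tuple $\bar y\in\Sigma^\star$, prove that $\equiv^{2+\alpha}_{\A}$ on the $\A$-tuples descends to $\equiv^{\alpha}_{\C}$ on the anchors, and then lift the resulting $\C$-automorphism to $\A$, using the $\equiv^2$ information to match positions within the fans and the unproductive piece. The paper makes the same productive/unproductive split and the same choice of $\bar y$ (phrased as a minimal $\bar y\supseteq\bar x$ whose generated substructure contains $\bar x'$).

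Where your write-up diverges is in the descent step, and there you are making it harder than it is. You flag as the ``main obstacle'' that an $\A$-response to a $\Sigma^\star$-challenge might live in the configuration layer and need to be shuffled back into $\Sigma^\star$; your sketch of how to do this (invoking the hypotheses of Lemma~\ref{lm:EquivTransfer}, which runs in the opposite direction) is not coherent as written. In the paper this obstacle simply does not arise: if $\bar z\bar z'\equiv^{2+\alpha}_{\A}\bar w\bar w'$ and one challenges with $\bar c\in\Sigma^\star$ at level $2+\beta<2+\alpha$, then the response $\bar d$ is forced to lie in $\Sigma^\star$ already, because $2+\beta\geq 1$ and no element of $\Sigma^\star$ is $\equiv^1_{\A}$ to any element of $X_P$ (their $F_i$-neighbourhoods differ). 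The induction hypothesis then applies directly to $\bar z\bar c$ and $\bar w\bar d$, with no shuffling and no appeal to Lemma~\ref{lm:EquivTransfer}. Replacing your last paragraph with this observation makes the argument go through cleanly.
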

\begin{proof}
We use the notation $X_{P}$ to mean the subset of $X = A \setminus \Sigma^{\star}$ which corresponds to elements on fans associated with productive chains of the configuration space.  We write $X_{U}$ to mean the subset of $X$ which corresponds to the unproductive chains of the configuration space.  Therefore, $A = \Sigma^{\star} \cup X_{P} \cup X_{U}$, a disjoint union.  Thus, we will show that for each $\bar{x} \in \Sigma^{\star}$, $\bar{x}' \in X_{P}$, $\bar{u} \in X_{U}$ there is $\bar{y} \in \Sigma^{\star}$ such that $\SR_{\A} (\bar{x} \bar{x}' \bar{u} ) \leq 2 + \SR_{\C} (\bar{y})$.

Given $\bar{x}, \bar{x}', \bar{u}$, let $\bar{y} \in \Sigma^{\star}$ be a minimal element satisfying that $\bar{x} \subset \bar{y}$ and that $\bar{x}' \subset \la \bar{y}, E_{i}, F_{i} : i =1 \ldots n\ra_{\A} $.  Then we will show that $\bar{y}$ is the desired witness.  First, we observe that since the unproductive part of the structure is disconnected from the productive elements we can consider the two independently.  Moreover, because the structure of the unproductive part is predetermined and simple, for $\bar{u}, \bar{v} \in X_{U}$, if $\bar{u} \equiv_{\A}^{1} \bar{v}$ then $(\A, \bar{u}) \cong (\A, \bar{v})$. It remains to consider the productive part of the structure.

Consider any $\bar{z} \in \Sigma^{\star}$, $\bar{z}' \in X_{P}$ satisfying $\bar{z}' \subset \la \bar{z}, E_{i}, F_{i} : i =1 \ldots n\ra_{\A} $.  We claim that $SR_{\A}(\bar{z} \bar{z}') \leq 2 + \SR_{\C}(\bar{z})$.  It suffices to show that for all $\alpha$, for all $\bar{w} \in \Sigma^{\star}, \bar{w}' \in X_{P}$, 
\[
\bar{z}\bar{z}'~\equiv_{\A}^{2+\alpha}~\bar{w}\bar{w}' \qquad \implies \qquad \bar{z}~\equiv_{\C}^{\alpha}~\bar{w}.
\]
This is sufficient for the following reason. If $\bar{z} \bar{z}' \equiv_{\A}^{2 + \SR_{\C}(\bar{z})} \bar{w} \bar{w}'$ then $\bar{z} \equiv_{\C}^{\SR_{\C}(\bar{z})} \bar{w}$ and hence $(\C, \bar{z}) \cong (\C, \bar{w})$.  From this automorphism, we can define an automorphism of $\A$ mapping $\bar{z} \bar{z}'$ to $\bar{w} \bar{w}'$ because $\bar{z} \bar{z}' \equiv_{\A}^{2} \bar{w} \bar{w}'$  and hence for each $i$, the relative positions of $\bar{z}'$ and $\bar{w}'$ in the fans above $\bar{z}$ and $\bar{w}$ are isomorphic.  Therefore, $2 + \SR_{\C}(\bar{z}) \geq \SR_{\A}(\bar{z}\bar{z}')$.

So, we now show that for all $\alpha$, for all $\bar{w} \in \Sigma^{\star}, \bar{w}' \in X_{P}$, $\bar{z}\bar{z}' \equiv_{\A}^{2+\alpha} \bar{w} \bar{w}'$ implies that $\bar{z} \equiv_{\C}^{\alpha} \bar{w}$.   We proceed by induction on $\alpha$.  For $\alpha = 0$, suppose that $\bar{z}\bar{z}' \equiv_{\A}^{2} \bar{w} \bar{w}'$.  This implies that for each $i$ and for each subsequence of length $m_{i}$ of the indices, the $E_{i}$-fan above $\bar{z}_{j}$ has an infinite chain if and only if the $E_{i}$-fan above $\bar{w}_{j}$ does. Therefore, $R_{i}(\bar{z}_{j})$ if and only if $R_{i}(\bar{w}_{j})$.  Hence, $\bar{z} \equiv_{\C}^{0} \bar{w}$, as required. For the inductive step, we assume the result holds for all $\beta < \alpha$.  Suppose that $\bar{z}\bar{z}' \equiv_{\A}^{2+\alpha} \bar{w} \bar{w}'$.  Let $\beta < \alpha$ and $\bar{c} \in \Sigma^{\star}$.  Then $2 + \beta < 2 +\alpha$ so by definition there is $\bar{d} \in \Sigma^{\star}, \bar{d}' \in X_{P}$ such that $\bar{z}\bar{z}' \bar{c}\equiv_{\A}^{2+\beta} \bar{w} \bar{w}' \bar{d} \bar{d}'$.  However, since $2 + \beta > 1$, $\bar{d}'$ must be empty (elements in $\Sigma^{\star}$ cannot be $1$-equivalent to elements in $X_{P}$).   Then by the induction hypothesis, $\bar{z} \bar{c} \equiv_{\C}^{\beta} \bar{w} \bar{d}$.  The argument works symmetrically if we are given $\bar{d}$ and want to find $\bar{c}$.   Thus, $\bar{z} \equiv_{\C}^{\alpha} \bar{w}$, as required.
\end{proof}

Putting Lemmas \ref{lm:EquivTransfer} and \ref{lm:CRankHigher} together, we can prove the main result about our construction.

\begin{theorem}\label{thm:SR}  Let $\C$ be a computable structure and construct the automatic structure $\A$ from it as above.  
Then $\SR(\C) \leq \SR(\A) \leq 2 + \SR(\C)$.
\end{theorem}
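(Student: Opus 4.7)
The plan is to prove the two inequalities separately, with each direction essentially repackaging one of the preceding lemmas. The upper bound will follow almost immediately from Lemma \ref{lm:CRankHigher}, while the lower bound will use Lemma \ref{lm:EquivTransfer} together with a short argument that automorphisms of $\A$ restrict to automorphisms of $\C$.

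For the upper bound $\SR(\A) \le 2 + \SR(\C)$: an arbitrary tuple in $A$ decomposes uniquely (by the partition $A = \Sigma^\star \cup X_P \cup X_U$) as $\bar{x}\bar{x}'\bar{u}$ with $\bar{x}\in\Sigma^\star$, $\bar{x}'\in X_P$, $\bar{u}\in X_U$. Lemma \ref{lm:CRankHigher} furnishes a $\bar{y}\in\Sigma^\star$ with $\SR_\A(\bar{x}\bar{x}'\bar{u}) \le 2 + \SR_\C(\bar{y})$. Since by definition $\SR_\C(\bar{y}) < \SR(\C)$, ordinal arithmetic gives $\SR_\A(\bar{x}\bar{x}'\bar{u}) + 1 \le 2 + (\SR_\C(\bar{y})+1) \le 2 + \SR(\C)$, and taking the supremum over all tuples yields the desired inequality.

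For the lower bound $\SR(\C) \le \SR(\A)$, I will show that $\SR_\C(\bar{x}) \le \SR_\A(\bar{x})$ for every tuple $\bar{x}$ in $\C$; this suffices since $\SR_\A(\bar{x}) < \SR(\A)$ for every such $\bar{x}$. Set $\alpha = \SR_\A(\bar{x})$ and suppose $\bar{y} \in \Sigma^\star$ satisfies $\bar{x} \equiv_\C^\alpha \bar{y}$. By Lemma \ref{lm:EquivTransfer}, $\bar{x} \equiv_\A^\alpha \bar{y}$, so by the definition of $\SR_\A(\bar{x})$ there is an automorphism $\phi$ of $\A$ with $\phi(\bar{x}) = \bar{y}$. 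I then claim that $\phi\!\upharpoonright\!\Sigma^\star$ is an automorphism of $\C$. First, $\Sigma^\star$ is a $\phi$-invariant subset of $A$ because it is precisely the set of elements that occur as a first-coordinate entry of some $F_i$-tuple. Second, $\phi$ preserves each $R_i$ on $\Sigma^\star$: for $\bar{x}\in\Sigma^\star$, the unique $c_{\bar{x}}$ with $F_i(\bar{x},c_{\bar{x}})$ is mapped by $\phi$ to the unique $c_{\phi(\bar{x})}$ with $F_i(\phi(\bar{x}), c_{\phi(\bar{x})})$, and since $\phi$ respects $E_i$ it carries the fan above $c_{\bar{x}}$ isomorphically to the fan above $c_{\phi(\bar{x})}$; hence the two fans agree on the presence of an infinite chain, i.e.\ on whether $R_i$ fails at $\bar{x}$ and $\phi(\bar{x})$.

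The substantive content of the theorem is entirely contained in Lemmas \ref{lm:EquivTransfer} and \ref{lm:CRankHigher}; the only subtlety I anticipate here is the interpretation argument of the previous paragraph, namely that $\Sigma^\star$ and each relation $R_i$ are invariant under automorphisms of $\A$. The $F_i$-predicates provide the necessary syntactic handle so that this interpretation goes through without needing to quantify over the productive-versus-infinite behaviour of fans in a more delicate way.
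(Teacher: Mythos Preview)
Your proof is correct and follows the same approach as the paper: both bounds are derived from Lemmas~\ref{lm:EquivTransfer} and~\ref{lm:CRankHigher} in exactly the way you describe. The only difference is presentational: where you explicitly argue that $\Sigma^\star$ is automorphism-invariant via the $F_i$-predicates and that the fan structure transfers the truth of $R_i$, the paper compresses this into the single remark that $\C$ is $L_{\omega_1,\omega}$-definable in $\A$ and hence inherits the isomorphism.
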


\begin{proof}
Let $\bar{x}$ be a tuple in the domain of $\C$.  Then, by the definition of Scott rank, $\SR_{A}(\bar{x})$ is the least ordinal $\alpha$ such that for all $\bar{y} \in \dom(\A)$, $\bar{x} \equiv_{A}^{\alpha} \bar{y}$ implies that $(\A, \bar{x}) \cong (\A, \bar{y})$; and similarly for $\SR_{C}(\bar{x})$.  We first show that $\SR_{\A}(\bar{x}) \geq \SR_{\C}(\bar{x})$.  Suppose $\SR_{C}(\bar{x}) = \beta$.  We assume for a contradiction that $\SR_{A}(\bar{x})= \gamma < \beta$.  Consider an arbitrary $\bar{z} \in \Sigma^{\star}$ (the domain of $\C$) such that $\bar{x} \equiv_{\C}^{\gamma} \bar{z}$.  By Lemma \ref{lm:EquivTransfer}, $\bar{x} \equiv_{\A}^{\gamma} \bar{z}$.  But, the definition of $\gamma$ as the Scott rank of $\bar{x}$ in $\A$ implies that $(\A, \bar{x}) \cong (\A, \bar{z})$.  Now, $\C$ is $L_{\omega_{1}, \omega}$ definable in $\A$ and therefore inherits the isomorphism.  Hence, $(\C, \bar{x}) \cong (\C, \bar{z})$.  But, this implies that $\SR_{\C}(\bar{x}) \leq \gamma < \beta = \SR_{\C}(\bar{x})$, a contradiction.

So far, we have that for each $\bar{x} \in \Sigma^{\star}$, $\SR_{\A}(\bar{x}) \geq \SR_{\C}(\bar{x})$.  Hence, since $\dom(\C) \subset \dom(\A)$, 
\begin{align*}
\SR (\A) &= \sup\{\SR_{\A}(\bar{x}) +1: \bar{x} \in \dom(\A)\} \\
&\geq \sup\{\SR_{\A}(\bar{x}) +1: \bar{x} \in \dom(\C)\} \\
&\geq \sup\{\SR_{\C}(\bar{x}) +1: \bar{x} \in \dom(\C)\} = \SR(C).
\end{align*}

In the other direction, we wish to show that $ \SR(\A) \leq 2+ \SR(\C)$.  Suppose this is not the case.  Then there is $\bar{x} \bar{x}' \bar{u} \in \A$ such that $\SR_{\A}(\bar{x} \bar{x}' \bar{u} ) \geq 2 + \SR(\C)$.  By Lemma \ref{lm:CRankHigher}, there is $\bar{y} \in \Sigma^{\star}$ such that $2 + \SR_{\C} (\bar{y}) \geq 2 + \SR(\C)$, a contradiction.
\end{proof}

Recent work in the theory of computable structures has focussed on finding computable structures of high Scott rank.  Nadel \cite{Nad85} proved that any computable structure has Scott rank at most $\omega_{1}^{CK} + 1$. Early on, Harrison \cite{Harr68} showed that there is a computable ordering of type $\omega_{1}^{CK}( 1 + \eta)$ (where $\eta$ is the order type of the rational numbers). This ordering has Scott rank $\omega_{1}^{CK}+1$, as witnessed by any element outside the initial $\omega_{1}^{CK}$ set.   However, it was not until much more recently that a computable structure of Scott rank $\omega_{1}^{CK}$ was produced (see Knight and Millar \cite{KnM}).  A recent result of Cholak, Downey, and Harrington gives the first natural example of a structure with Scott rank $\omega_1^{CK}$: the computable enumerable sets under inclusion \cite{CDH}.

\begin{corollary}
There is an automatic structure with Scott rank $\omega_1^{CK}$.  There is an automatic structure with Scott rank $\omega_1^{CK}+1$.
\end{corollary}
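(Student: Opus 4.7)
The plan is to apply Theorem \ref{thm:SR} to the two computable structures of high Scott rank cited in the paragraph preceding the corollary, and then simplify the resulting bounds using ordinal arithmetic. So I would pick two specific computable structures as input: Knight--Millar's computable structure $\C_1$ of Scott rank exactly $\omega_1^{CK}$, and Harrison's computable linear ordering $\C_2$ of order type $\omega_1^{CK}(1+\eta)$, which has Scott rank $\omega_1^{CK}+1$. Both structures are computable, so the construction preceding Theorem \ref{thm:SR} (based on configuration spaces of reversible Turing machines for each relation in the signature, plus smoothing of unproductive parts and of computation chain lengths) applies to each of them.

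Next, I would feed $\C_1$ and $\C_2$ through the construction to obtain automatic structures $\A_1$ and $\A_2$. By Theorem \ref{thm:SR},
\[
\SR(\C_i) \;\leq\; \SR(\A_i) \;\leq\; 2 + \SR(\C_i) \qquad (i=1,2).
\]
Here I use the standard fact of ordinal arithmetic that $2+\alpha = \alpha$ whenever $\alpha \geq \omega$. Since $\omega_1^{CK}$ and $\omega_1^{CK}+1$ are both infinite ordinals bounded below by $\omega$, we obtain $2 + \omega_1^{CK} = \omega_1^{CK}$ and $2 + (\omega_1^{CK}+1) = \omega_1^{CK}+1$. Substituting, the inequalities collapse to equalities:
\[
\SR(\A_1) = \omega_1^{CK}, \qquad \SR(\A_2) = \omega_1^{CK}+1.
\]

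There is no real obstacle here beyond invoking the correct existing results: the heavy lifting is done by Theorem \ref{thm:SR}, whose proof already handles all the subtlety of transferring Scott rank from $\C$ to $\A$ via the Turing machine configuration space encoding. The one small point worth checking when writing this up is simply that the computable structures of Knight--Millar and Harrison are presented (or can be re-presented) in relational vocabularies with finitely many relations, so that the construction of the preceding section (which builds one smoothed configuration graph per relation symbol and then takes their disjoint union) applies verbatim; this is routine since any computable structure has a computable presentation in a finite relational signature (for Harrison's ordering it is already a single binary relation).
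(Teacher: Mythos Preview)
Your proposal is correct and is exactly the intended argument: the paper states the corollary immediately after citing Harrison's ordering and the Knight--Millar structure, leaving the proof implicit, and your write-up simply spells out that applying Theorem~\ref{thm:SR} to each and collapsing $2+\omega_1^{CK}=\omega_1^{CK}$ (and similarly for $\omega_1^{CK}+1$) gives the result. There is nothing to add.
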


We also apply the construction to \cite{GonKn02}, where it is proved that there are computable structures with Scott ranks above each computable ordinal.  In this case, we get the following theorem.\\

\noindent{\bf Theorem \ref{thm:ScottRank}.~}{\em For each computable ordinal $\alpha$, there is an automatic structure of Scott rank at least $\alpha$.}

\section{Cantor-Bendixson Rank of Automatic Successor Trees}\label{s:CBdefs}

In this section we show that there are automatic successor trees of high Cantor-Bendixson (CB) 
rank. Recall the definitions of partial order trees and successor trees from Section \ref{s:Intro}.
Note that if $(T,\leq)$ is an automatic partial order tree then the successor tree $(T,S)$, where the relation $S$ is defined by $S(x,y) \iff (x <  y) \  \& \ \neg \exists z (x < z < y)$, is automatic. 

\begin{definition}
The {\bf derivative} of a (partial order or successor) tree $T$, $d(T)$, is the subtree of $T$ whose domain is 
\[
\{ x \in T: \text{ $x$ lies on at least two infinite paths in $T$}\}.
\]
By induction, $d^{0}(T) = T$, $d^{\alpha+1} (T) = d(d^{\alpha}(T))$, and for $\gamma$ a limit ordinal, $d^{\gamma}(T) = \cap_{\beta < \gamma} d^{\beta}(T)$.  The {\bf CB rank} of the tree, $CB(T)$, is the least $\alpha$ such that $d^{\alpha}(T) = d^{\alpha+1}(T)$.
\end{definition}

The CB ranks of automatic partial order trees are finite \cite{KhRS03}.  This is not true of automatic successor trees.
The main theorem of this section provides a general technique for building trees of given CB ranks. Before we get to it, we give some examples of automatic successor ordinals whose CB ranks are low.

\begin{example} 
There is an automatic partial order tree (hence an automatic successor tree) whose CB rank is $n$ for each $n \in \omega$.
\end{example}

\begin{proof}
The tree $T_n$ is defined over the $n$ letter alphabet $\{a_1,\ldots, a_n\}$ as follows. The domain of the tree is  $a_{1}^{\star} \cdots a_{n}^{\star}$. The order $\leq_n$ is the prefix partial order. Therefore, the successor relation is given as follows:
\begin{equation*}
S(a_{1}^{\ell_{1}} \cdots a_{i}^{\ell_{i}}) = \begin{cases}
	\{ a_{1}^{\ell_{1}} \cdots a_{i}^{\ell_{i}+1}, a_{1}^{\ell_{1}} \cdots a_{i}^{\ell_{i}}a_{i+1} \} \qquad \text{if $1 \leq i <n$} \\
	\{ a_{1}^{\ell_{0}} \cdots a_{i}^{\ell_{i}+1} \} \qquad \qquad \text{if $i=n$}
\end{cases}
\end{equation*}

Note that if $n=0$ then the tree is empty, which is consistent with it having CB rank $0$. 
It is obvious that $T_n$ is an automatic partial order tree. The rank of $T_{n}$ can be shown, by induction, to be equal to $n$.
 \end{proof}

The following examples code the finite rank successor trees uniformly into one automaton in order to push the rank higher.  We begin by building an automatic successor tree $T_{\omega+1}$ of rank $\omega+1$. We note that the CB ranks of all trees with at most countably many paths are successor ordinals. Thus, 
$T_{\omega+1}$ will have countably many paths. Later, we construct a tree of rank $\omega$ which must embed the perfect
tree because its CB rank is a limit ordinal.

\begin{example} 
There is an automatic successor tree $T_{\omega+1}$ whose CB rank is $\omega+1$.
\end{example}

\begin{proof}
Informally, this tree is a chain of trees of increasing finite CB ranks.  Let $T_{\omega+1} = (\{0,1\}^{\star}, S) $ with $S$ defined as follows:
\begin{align*}
S(1^{n}) = \{ 1^{n}0, 1^{n+1} \} \qquad &\text{for all $n$}\\ 
S(0u) = 0u0 \qquad &\text{for all $u \in \{0,1\}^{\star}$}\\
S(1^{n}0 u) = \{ 1^{n}0u0, 1^{n-1} 0 u 1 \} \qquad &\text{for $n \geq 1$ and $u \in \{0,1\}^{\star}$}
\end{align*}

Intuitively, the subtree of rank $n$ is coded by the set $X_n$ of nodes which contain exactly $n$  $1$s.
By induction on the length of strings, we can show that $\text{range}(S) = \{0,1\}^{\star}$ and hence the domain of the tree is also $\{0,1\}^{\star}$.  It is also not hard to show that the transitive closure of the relation $S$ satisfies the conditions of being a tree. It is also easy to see that $T_{\omega+1}$ is automatic.
Finally, we compute the rank of $T_{\omega + 1}$.  We note that in successive derivatives, each of the finite rank sub-trees $X_n$ is reduced in rank by $1$.  Therefore 
\[
d^{\omega}(T) = 1^{\star}.
\]
But, since each point in $1^{\star}$ is on exactly one infinite path, $d^{\omega+1}(T) = \emptyset$, and this is a fixed-point.  Thus, $CB(T_{\omega+1}) = \omega+1$, as required.
\end{proof}

The following example gives a tree $T_{\omega}$ of rank $\omega$. The idea is to code the trees $T_n$ provided above into the leftmost path of the full binary tree.

\begin{example}\label{ex:LimitOrd}
There is an automatic successor tree $T_{\omega}$ whose CB rank is $\omega$.
\end{example}

\begin{proof}
The tree is the full binary tree, where at each node on the leftmost branch we append trees of increasing finite CB rank. Thus, 
define $T_{\omega} = ( \{0,1\}^{\star} \cup \{0,a\}^{\star}, S)$ where $S$ is given as follows:
\begin{align*}
S(u1v) = \{ u1v0, u1v1 \} \qquad &\text{for all $u,v \in \{0,1\}^{\star}$}\\ 
S(0^{n}) = \{0^{n+1}, 0^{n} 1, 0^{n} a\} \qquad &\text{for all $n$} \\
S(au) = aua \qquad &\text{for all $u \in \{0,a\}^{\star}$}\\
S(0^{n}a u) = \{ 0^{n}aua, 0^{n-1} a u 0 \} \qquad &\text{for $n \geq 1$ and $u \in \{0,a\}^{\star}$}
\end{align*}

Proving that $T_{\omega}$ is an automatic successor tree is a routine check.   So, we need only compute its rank.  
Each derivative leaves the right part of the tree (the full binary tree) fixed.  However, the trees appended to the leftmost path of the tree are affected by taking derivatives. Successive derivatives decrease the rank of the protruding finite rank trees by $1$.  Therefore, 
$d^{\omega}(T_{\omega}) = \{0,1\}^{\star}$, a fixed point.  Thus, 
$CB(T_{\omega}) = \omega$.
\end{proof}

To extend these examples to higher ordinals, we consider the {\bf product} operation on 
trees defined as follows. Let $(T_1, S_1)$ and $(T_2,S_2)$ be successor trees. The product of these trees is the tree $(T,S)$ with domain $T=T_1\times T_2$ and successor relations given by:
\begin{align*}
S((x,y), (u,v))\iff\begin{cases}
y ~\text{is root of $T_{2}$~ and $\left( u=x, S_{2}(y,v)\right)$ or $\left( S_{1}(x,u),  y=v\right)$} \\
y ~\text{is not the root of $T_{2}$ and $u=x$, $S_{2}(y,v)$}.
\end{cases}
\end{align*}

The following is an easy proposition.
\begin{proposition}
Assume that $T_1$ and $T_2$ are successor trees of CB ranks $\alpha$ and $\beta$, respectively, each having at most countably many paths. Then $T_1\times T_2$ has 
CB rank $\alpha +\beta$. Moreover, if $T_1$ and $T_2$ are automatic successor trees then so is the product. 
\end{proposition}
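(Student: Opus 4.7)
The proof divides naturally into two parts: establishing automaticity of the product, and establishing the CB rank formula.

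For automaticity, I would treat $T_1$ and $T_2$ as recognized by finite automata over a common alphabet $\Sigma$, with the root $r_2$ of $T_2$ a regular singleton. The domain $T_1 \times T_2$ is regular as a convolution language, and the successor relation $S$ decomposes into a disjunction of three regular cases corresponding to the clauses in its definition: a $T_2$-move at a spine node ($y = r_2$, $u = x$, $S_2(y,v)$), a $T_1$-move along the spine ($y = r_2$, $S_1(x,u)$, $y = v$), and a $T_2$-move off the spine ($y \neq r_2$, $u = x$, $S_2(y,v)$). Each case is a Boolean combination of the regular relations $S_1$, $S_2$, equality, and the singleton $\{r_2\}$, so $S$ is recognized by a $4$-tape automaton and the product is automatic.

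For the CB rank claim, I would proceed by induction on $\beta$, exploiting the decomposition of $T_1 \times T_2$ into the spine $\mathrm{Sp} := T_1 \times \{r_2\}$ (isomorphic to $T_1$) together with, attached at each spine node $(x, r_2)$, a copy of the upper part $T_2 \setminus \{r_2\}$. The central structural observation is that once an infinite path in $T_1 \times T_2$ leaves the spine, the successor relation forces it to remain within the single $T_2$-copy at the point of departure. Consequently, for any node $(x, y)$ with $y \neq r_2$, the infinite paths through $(x, y)$ in $T_1 \times T_2$ are in bijection with the infinite paths through $y$ in $T_2$. A straightforward induction on $\gamma$ then yields that the off-spine portion of $d^{\gamma}(T_1 \times T_2)$ equals $T_1 \times (d^{\gamma}(T_2) \setminus \{r_2\})$.

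To finish, I would then show that the spine itself survives the first $\beta$ derivatives by tracking, for each spine node $(x, r_2)$, the infinite paths continuing along the spine together with those descending into the $T_2$-copies hanging off $x$ or off some $T_1$-descendant of $x$. At stage $\gamma = \beta$, the countable-paths hypothesis gives $d^{\beta}(T_2) = \emptyset$, so the attached $T_2$-copies are exhausted and $d^{\beta}(T_1 \times T_2)$ is isomorphic to $T_1$ as a successor tree. Applying $\alpha$ further derivatives then drives this remaining copy of $T_1$ to a fixed point, giving a total CB rank of $\alpha + \beta$. I expect the delicate point to be precisely the spine-preservation step: a priori, a spine node could fail to lie on two infinite paths in some intermediate derivative once both its own $T_2$-copy and its $T_1$-successor branches have thinned out. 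Careful bookkeeping of infinite paths supplied jointly by the $T_2$-copies remaining at $x$ and at its $T_1$-descendants, together with the countable-paths assumption, is what should rule this out.
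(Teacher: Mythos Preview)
The paper declares this proposition ``easy'' and supplies no proof at all, so there is nothing to compare your sketch against. Your outline is exactly the natural argument: automaticity is immediate from closure of regular relations under Boolean combinations and equality, and the rank computation proceeds by observing that the product is $T_1$ with a copy of $T_2$ hung at every node, so the first $\beta$ derivatives eat away the $T_2$-copies and the remaining $\alpha$ derivatives handle the surviving spine.

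Two remarks are worth making. First, a bookkeeping point: your own argument applies $\beta$ derivatives and then $\alpha$ more, which gives ordinal $\beta+\alpha$, not $\alpha+\beta$; for the paper's intended use (iterated products of $T_{\omega+1}$) the two coincide, but in general they differ. Second, you are right that spine preservation is the delicate step, and in fact the countable-paths hypothesis alone does \emph{not} rule out the failure you anticipate. If $\beta$ is a successor and $d^{\beta-1}(T_2)$ has at most one infinite path---for instance, if $T_2$ is a root with infinitely many leaf children, so $\beta=1$ and $T_2$ has no infinite path---then a spine node $(x,r_2)$ over a $T_1$-leaf sees only one infinite path at stage $\beta-1$ and drops out early; in that example $T_1\times T_2$ has precisely the infinite paths of $T_1$ and hence CB rank $\alpha$, not $\alpha+1$. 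So the proposition as stated is slightly loose; the clean extra hypothesis your argument actually uses is that $d^{\gamma}(T_2)$ has at least two infinite paths for every $\gamma<\beta$ (equivalently, $r_2\in d^{\gamma+1}(T_2)$ for $\gamma<\beta$), which does hold in the cases the paper applies it to.
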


The examples and the proposition above yield tools for building automatic successor trees of CB ranks up to $\omega^2$. However, it is not clear that these methods can be applied to obtain automatic successor trees of higher CB ranks.  We will see that a different approach to building automatic successor trees will yield all possible CB ranks.

We are now ready for the main theorem of this section. As before, we will transfer results from computable trees to automatic trees. We note that every computable successor 
tree $(T,S)$ is also a computable partial order tree. Indeed, in order to compute
if $x\prec_S y$, we effectively find the distances of $y$ and $x$ from the root. If $y$ is closer to the root or is at the same distance as $x$ then $\neg (x\prec_S y)$; otherwise,
we start computing the trees above all $z$ at the same distance from the root as $x$ is.
Then $y$ must appear in one of these trees. Thus, we have computed whether $x\prec_S y$. Hence, every computable successor tree is a computable partial order tree.  However, not every computable partial order tree is a computable successor tree.  We have the following inclusions:
\[
\text{Aut.\ PO trees} \subset \text{Aut.\ Succ.\ trees} \subset \text{Comp.\ Succ.\ trees} \subset \text{Comp.\ PO trees}
\]
We use the fact that for each $\alpha < \omega_1^{CK}$  there is a computable successor tree of CB rank $\alpha$.  This fact can be proven by recursively coding up computable trees of increasing CB rank.

\vspace{5pt}

\noindent{\bf Theorem \ref{thm:RecTrees}.~}{\em
For $\alpha < \omega_1^{CK}$ there is an automatic successor tree of CB rank $\alpha$.}

\begin{proof} 
Suppose we are given $\alpha < \omega_{1}^{CK}$.  Take a computable tree $R_{\alpha}$ of CB rank $\alpha$.  We use the same construction as in the case of well-founded relations (see the proof of Theorem \ref{thm:HeightRank}).
The result is
a stretched out version of the tree $R_{\alpha}$, where between each two elements of the original tree we have a coding of their computation.  In addition, extending from each $x \in \Sigma^{\star}$ we have infinitely many finite computation chains.  Those chains which correspond to output \lqt no\rqt are not connected to any other part of the automatic structure.  Finally, there is a disjoint part of the structure consisting of chains whose bases are not valid initial configurations.  By the reversibility assumption, each unproductive component of the configuration space is isomorphic either to a finite chain or to an $\omega$-chain.  Moreover,  the set of invalid initial configurations which are the base of such an unproductive chain is regular.  We connect all such bases of unproductive chains to the root and get an automatic successor tree, $T_{\alpha}$.

We now consider the CB rank of $T_{\alpha}$.  Note that the first derivative removes 
all the subtrees whose roots are at distance $1$ from the root  and are invalid initial computations.  This occurs because each of the invalid computation chains has no branching and is not connected to any other element of the tree.  Next, if we consider the subtree of $T_{\alpha}$ rooted at some $x \in \Sigma^{\star}$, we see that all the paths which correspond to computations whose output is \lqt no\rqt vanish after the first derivative.  Moreover, $x \in d(T_{\alpha})$ if and only if $x \in d(R_{\alpha})$ because the construction did not add any new infinite paths.  Therefore, after one derivative, the structure is exactly a stretched out version of $d(R_{\alpha})$.  Likewise, for all $\beta < \alpha$, $d^{\beta}(T_{\alpha})$ is a stretched out version of $d^{\beta}(R_{\alpha})$. Hence, $CB(T_{\alpha}) = CB(R_{\alpha}) = \alpha$.
\end{proof}

Automatic successor trees have also been recently studied by Kuske and Lohrey in \cite{KuLoh}.  In that paper, techniques similar to those above are used to show that the existence of an infinite path in an automatic successor tree is $\Sigma_1^1$-complete.  In addition, Kuske and Lohrey look at graph questions for automatic graphs and show that the existence of a Hamiltonian path is $\Sigma_1^1$-complete whereas the set cover problem is decidable.

\section{Conclusion}
This paper studies the complexity of automatic structures.  In particular, we seek to understand the difference in complexity between automatic and computable structures.  We show that automatic well-founded partial orders are considerably simpler than their computable counterparts, because the ordinal heights of automatic partial orders are bounded below $\omega^\omega$.  On the other hand, computable well-founded relations, computable successor trees, and computable structures in general can be transformed into automatic objects in a way which (almost) preserves the ordinal height, Cantor-Bendixson rank, or Scott ranks (respectively).  Therefore, the corresponding classes of automatic structures are as complicated as possible.

\end{document}